\newtheorem{TEO}{Theorem}[section]
\newtheorem{PROP}[TEO]{Proposition}
\newtheorem{LEM}[TEO]{Lemma}
\newtheorem{DEF}[TEO]{Definition}
\newtheorem{EX}[TEO]{Example}
\newtheorem{REM}[TEO]{Remark}
\newcommand\Oh{{\mathcal O}}
\newcommand\sF{{\mathcal F}}
\newcommand\sI{{\mathcal I}}
\newcommand\sJ{{\mathcal J}}
\newcommand\sK{{\mathcal K}}
\newcommand\sM{{\mathcal M}}
\newcommand\om{\omega}
\newcommand\fie{\varphi}
\newcommand\dual{\mathrel{\raise3pt\hbox{$\underline{\mathrm{\thinspace d
\thinspace}}$}}}
\newcommand\iso{\cong}
\newcommand\into{\hookrightarrow}
\newcommand\onto{\twoheadrightarrow}
\newcommand\proj{\mathbb P}
\newcommand\Ka{\mathbb K}
\newcommand {\p}{\mathbb{P}}
\newcommand {\red}{\operatorname{red}}
\newcommand\length{\operatorname{length}}
\newcommand\im{\operatorname{Im}}
\newcommand\Ann{\operatorname{Ann}}
\renewcommand\div{\operatorname{div}}
\newcommand\Ext{\operatorname{Ext}}
\newcommand\sExt{\operatorname{{\mathcal E}{\it xt}}}
\newcommand\Hom{\operatorname{Hom}}
\newcommand\sHom{\operatorname{{\mathcal H}{\it om}}}
\newcommand\Sing{\operatorname{Sing}}
\newcommand\Sym{\operatorname{Sym}}
\newcommand\Tor{\operatorname{Tor}}
\newenvironment{proof}[1][]{\noindent\textbf{Proof#1}.  }{{\hfill $\blacksquare$}}
\begin{document}

\title{The canonical ring  of a 3-connected curve
\thanks{This research was partially supported  by Italian MIUR through PRIN 2008 project  ``Geometria delle variet\`a algebriche e dei loro spazi di moduli". }}
\author{Marco Franciosi, Elisa Tenni}
\date{}

\maketitle

\begin{abstract}
Let $C$ be a projective curve either reduced with planar singularities or contained in a smooth  algebraic surface.

 We show that  the canonical ring
$R(C, \omega_C)= \bigoplus_{k\geq 0} H^0(C, {\omega_C}^{\otimes k})$ is generated in degree 1 if
$C$ is 3-connected and not (honestly) hyperelliptic; we show moreover that
 $R(C, L)=\bigoplus_{k\geq 0} H^0(C,L^{\otimes k})$ is generated in degree 1 if $C$ is reduced with planar singularities  and $L$
is an invertible sheaf such that $ \deg L_{|B} \geq  2p_a(B)+1$  for every $B\subseteq C$.

\hfill\break	
{\bf keyword:} algebraic curve,  Noether's theorem,  canonical ring

\hfill\break  {\bf Mathematics Subject Classification (2010)} 14H20,  14C20, 14H51
\end{abstract}

\section{Introduction}

Let $C$ be a projective curve either reduced with planar singularities or contained in a smooth  algebraic surface, $\om_C$ be its   dualizing  sheaf of $C$ and $L$ be an invertible sheaf on~$C$.

  The main result of this paper is Theorem \ref{noether} stating that  the  canonical ring
 $$ \displaystyle{ R(C, \omega_C)=\bigoplus_{k\geq 0} H^0(C, {\omega_C}^{\otimes k})}$$
 is generated in degree 1 if
$C$ is a 3-connected and not honestly  hyperelliptic curve (see Definition \ref{hyperelliptic} and Definition \ref{m-connected}). This  is a generalization to singular curves of  the classical
Theorem of Noether  for smooth curves  (see \cite[\S III.2]{ACGH})  and can be regarded as a first step in a more general analysis of the Koszul groups
$ \sK_{p,q}(C,\omega_C)$  of  3-connected curves (see \cite{Gr} for the definition and the statement of the so called ``{\em Green's conjecture}"). A detailed explanation of the role that the Koszul groups of smooth and singular curves play in the geometry of various moduli spaces can be found in  \cite{Ap-Far}.


Additional motivation for the present work comes from the theory of surface  fibrations,
as shown by  Catanese and Ciliberto in \cite{Ca-Ci} and Reid in  \cite{Reid}.
 Indeed,
given a  surface fibration $ f\colon S\rightarrow B$  over a smooth curve $B$,
 the  {\em  relative
canonical algebra}   $ R (f)= \bigoplus_{n\geq 0} f_{\ast}(\omega_{S/B}^{\otimes n})$ gives important information on the geometry of the surface. It is clear that the behaviour of $R(f)$ depends on the canonical ring of every fibre.

The main result on  the canonical ring   for  singular curves in the literature is the 1-2-3 conjecture,  stated by Reid in \cite{Reid} and proved
   in \cite{fr_adj}  and  \cite {Konno123}, which says that the canonical ring $R(C, \omega_C)$  of a connected Gorenstein curve of arithmetic genus $p_a(C)\geq 3$  is generated in degree
   1, 2, 3, with the exception of a small number of cases.
     More recently  in \cite{fr_pari} the first author proved that the canonical ring is generated in degree 1 under the strong assumption that $C$ is {\em even}
   (i.e., $\deg_B K_C $ is even on every subcurve $B\subseteq C$).

We remark that our result implies in particular that the canonical ring of a  regular surface of general type is generated in degree $\leq 3$ if there exists a curve $C\in|K_S|$ 
3-connected and not honestly hyperelliptic (see \cite[Thm. 1.2]{fr_pari}). Moreover one can applying the same argument of Konno  (see \cite[Thm. III]{Konno123}) and  see that  the
relative canonical algebra of a relatively minimal surface fibration  is generated in degree 1 if every fibre is 3-connected and not honestly hyperelliptic.  \\

Our second result is Theorem \ref{castelnuovo} stating that the ring
$$R(C, L)=\bigoplus_{k\geq 0} H^0(C,L^{\otimes k})$$
considered as an algebra over $H^0(C, \Oh_C)$,     is generated in degree 1 if $C$  is reduced with planar singularities  and $L$
is an invertible sheaf such that $ \deg L_{|B} \geq  2p_a(B)+1$  for every $B\subseteq C$.
This is a generalization  of a Theorem of Castelnuovo (see \cite{Mu}) on the projective normality of smooth  projective curves. 
This result  can be  useful when dealing with properties (for instance Brill-Noether properties) of a  family of smooth   curves  which degenerates to  $C$. 
 \\

In \cite{fr_adj}, \cite{fr_pari}, \cite{Konno123} the analysis of the canonical ring  is  based on the study of the  Koszul  groups $ \sK_{p,q}(C,\omega_C)$  (with $p,q$ small)   and their vanishing  properties,
together with some vanishing results for invertible sheaves of low degree.

In this paper we use a completely  different approach. Our method is  inspired by the arguments developed in  a series of papers by Green and Lazarsfeld which appeared in the late '80s (see
\cite{Gr},  \cite{green_laz})  and it is based on the generalization to singular curves of Clifford's Theorem   given by the authors in \cite{cliff}.

Given an invertible sheaf $L$
such that the map  $H^0(C, L) \otimes H^0(C,L) \to  H^0(C,L^{\otimes 2})$ fails to be surjective, we exhibit
a  $0$-dimensional scheme $S$ such that
 the   map
$H^0(C, L) \otimes H^0(C,L) \to H^0(S, \Oh_S) $ induced by the restriction also fails to be surjective. Thus  its  dual map
$$\varphi \colon \Ext^1(\Oh_S, \omega_C \otimes L^{-1}) \to \Hom(H^0(C, L), H^1(C, \omega_C \otimes L^{-1}))$$
is not injective. From the analysis of an extension in the Kernel of $\varphi$  we  conclude that the cohomology of
$\sI_S \cdot L$ must satisfy some numerical conditions. This in turn contradicts  Clifford's Theorem  when
$L=K_C$, or $\deg L_{|B} \geq 2p_a(B)+1$ on every~$B\subseteq C$.  \\

Finally, we wish to stress the role of numerical connectedness in  generalizing Noether's theorem. By the results of \cite{CFHR} (see \cite[\S 2, \S 3]{CFHR} or Theorem \ref{thm:curve}) and our main result Theorem \ref{noether}
  we have the following implications for a connected curve~$C$
$$\begin{matrix} C \mbox{ 3-connected, not  honestly hyperelliptic} \Longrightarrow\\   R(C, \omega_C)  \mbox{ is generated in degree } 1 \mbox{ and } \omega_C  \mbox{ ample}
\Longrightarrow \omega_C  \mbox{ \ is very ample } \end{matrix}$$
If $C$ is reduced
it is known that the three properties are equivalent (see \cite{CF}).
 However this is false when  $C$ is not reduced.  To see that the converse of the first implication fails one can take $C = 2F$, where $F$ is a non hyperelliptic fibre of a surface fibration.
In Example \ref{esempio} we will construct a curve  with very ample canonical sheaf which fails Noether's Theorem, thus proving that the converse of the second implication is false too.
This examples support our belief  that 3-connected curves are the most  natural  generalization of smooth curves when dealing with the properties of the canonical embedding.

\hfill\break
{\bf Acknowledgments.} 
 The second  author  wishes to thank  the Department of Mathematic 
 of the  University of Pisa, especially Rita Pardini,  for providing an excellent research environment.

\section{Notation and preliminary results}

 We work over  an algebraically closed field $\Ka$ of characteristic $\geq 0$.
 
Throughout this paper a curve $C$ will be a Cohen-Macaulay scheme of pure dimension.  It will be \textit{projective ,  either reduced  with planar singularities}
 (i.e. such that for every point $P\in C$ it is $ \dim_{\Ka} \sM/\sM^2 \leq 2$ where $\sM$ is the maximal ideal of $\Oh_{C,P}$)
\textit{or contained in a smooth algebraic surface}  $X$, in which case we allow $C$ to be reducible and non reduced. Notice that $C$ is Gorenstein.

In both cases we will use the standard notation for curves lying on smooth algebraic surface, writing $C=\sum_{i=1}^{s} n_{i} \Gamma_{i}$, where
 $\Gamma_{i}$ are    the
 irreducible components of $C$  and $n_{i}$ are their multiplicities.

\textit{A subcurve $B\subseteq C$ is a Cohen-Macaulay subscheme of pure dimension 1};  it
will be written as
 $\sum m_{i} \Gamma_{i}$, with $0\leq m_i\leq n_i$ for every $i$.


Notice that under these assumptions every subcurve $B \subset C$ is Gorenstein too. 

 $\om_C$ denotes the   dualizing sheaf of $C$ (see \cite{hart}, Chap.~III, \S7), and
$p_a(C)$ the arithmetic genus of $C$, $p_a(C)=1-\chi(\Oh_C)$.  $K_C$ denotes the canonical divisor.


\begin{DEF}\label{hyperelliptic}
A  curve
 $C$ is {\em honestly
hyperelliptic}  if there exists a finite
morphism \mbox{$\psi\colon C\to\proj^1$} of degree $2$.
(see    \cite[\S3]{CFHR} for a detailed treatment).
\end{DEF}

 If $A,\, B$ are subcurves of $C$ such that $A+B=C$, then their product $A \cdot B$ is
$$A \cdot B =
 \deg_B(K_C)- (2p_a(B)-2) =  \deg_A(K_C)- (2p_a(A)-2). $$
 If $C$ is contained in a smooth algebraic surface  $X$ this corresponds to the intersection product of curves as divisors on $X$.

\begin{DEF}\label{m-connected}
$C$ is {\em m-connected } if for every decomposition $C=A+B$ in effective, both nonzero curves, one has $A \cdot B\geq m$. $C$ is {\em numerically connected} if it is 1-connected.
\end{DEF}

First  we recall some useful
results proved  in  \cite{CF} and
 \cite{CFHR}.
  \begin{TEO}[ \cite{CFHR} \S 2, \S 3]\label{thm:curve}Let $C$ be a
Gorenstein curve. Then
  \begin{enumerate}
	 \renewcommand\labelenumi{(\roman{enumi})}
	
 \item If $C$ is 1-connected then $H^{1}(C,K_C) \iso \Ka$.

 \item If $C$ is 2-connected and $C\not \iso \proj^{1}$ then $|K_{C}|$ is base point free.

\item If $C$ is 3-connected and $C$ is not honestly hyperelliptic (i.e.,
there does not exist a finite
morphism $\psi\colon C\to\proj^1$ of degree $2$) then $K_{C}$ is very ample.
 \end{enumerate}
 \end{TEO}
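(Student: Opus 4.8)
The plan is to prove all three statements through a single mechanism: Serre duality for the Gorenstein curve $C$ turns each assertion into a computation of $h^0$ of a line bundle of the form $\Oh_C(Z)$ for a short $0$-dimensional subscheme $Z$, and numerical connectedness is used precisely to force the cohomology of the relevant twists of $\om_C$ to be as small as possible. Throughout I would freely use Serre duality $H^1(C,N)\iso H^0(C,N^{\vee}\otimes\om_C)^{*}$ for a line bundle $N$, together with Riemann--Roch $\chi(N)=\deg N+\chi(\Oh_C)$.

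For (i): since $C$ is $1$-connected it is connected, so $A:=H^0(C,\Oh_C)$ is a finite-dimensional $\Ka$-algebra with no nontrivial idempotents, hence local with residue field $\Ka$; it remains to kill its maximal ideal. Given a nonzero $f$ in the nilradical with $f^2=0$, the ideal sheaf $(f)$ and its annihilator cut out a decomposition $C=B_1+B_2$ into nonzero subcurves with $B_1\cdot B_2\le 0$, contradicting $1$-connectedness; thus $A=\Ka$. Serre duality then gives $H^1(C,\om_C)\iso H^0(C,\Oh_C)^{*}=\Ka$. As a byproduct one gets $h^0(\om_C)=p_a(C)$, which I would use below.

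For (ii) and (iii): I would first establish the key numerical vanishing lemma --- if a line bundle $N$ satisfies $\deg_B N\ge 2p_a(B)-1$ for every subcurve $0\neq B\subseteq C$, then $H^1(C,N)=0$ --- proved by induction on the number of components, peeling off a component $\Gamma$ via $0\to N|_{C-\Gamma}(-\Gamma\cap(C-\Gamma))\to N\to N|_{\Gamma}\to 0$ and using connectedness to keep the degree on each piece above the threshold. Now for a $0$-dimensional Cartier subscheme $Z$ of length $\ell$, base-point-freeness ($\ell=1$) and very ampleness ($\ell=2$) of $\om_C$ amount to surjectivity of $H^0(\om_C)\to H^0(\om_C|_Z)$, whose cokernel is $\ker\bigl(H^1(\om_C(-Z))\to H^1(\om_C)\bigr)$; by Serre duality this kernel has dimension $h^0(\Oh_C(Z))-1$. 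Writing $\deg_B\om_C(-Z)=2p_a(B)-2+(C-B)\cdot B-\ell_B$, the only subcurves $B$ that can contribute beyond the global constant term are those with $(C-B)\cdot B\le\ell_B$; $m$-connectedness with $m=\ell+1$ (that is, $2$-connected for base-point-freeness, $3$-connected for very ampleness) rules these out, so any failure is genuinely global and produces a nonconstant section of $\Oh_C(Z)$, i.e. a finite morphism $C\to\proj^1$ of degree $\le\ell$. For $\ell=1$ this forces $C\iso\proj^1$, and for $\ell=2$ it forces $C$ honestly hyperelliptic; both are excluded by hypothesis, giving (ii) and (iii).

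The main obstacle I anticipate is the bookkeeping when $Z$ is supported at singular points or spread over several components and the non-reduced locus, so that $Z$ need not be a Cartier divisor and $\om_C\otimes\sI_Z$ need not be locally free. There one must work with the dualizing sheaves of subcurves and with the torsion-free sheaf $\sI_Z$ directly, checking that the restriction-map criterion and the subcurve degree estimates survive; this is exactly the delicate part of \cite{CFHR}. A secondary subtlety is verifying that the global pencil obtained in (iii) gives an honest degree-$2$ morphism of the whole of $C$, and not merely a map that degenerates on some subcurve, which is again guaranteed by $3$-connectedness.
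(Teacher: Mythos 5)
This statement is not proved in the paper at all: it is quoted verbatim from \cite{CFHR} (\S 2, \S 3) as background, so there is no internal proof to compare against. Judged on its own terms, your outline does follow the strategy of \cite{CFHR} (Serre duality plus numerical connectedness plus analysis of degenerate pencils), and part (i) is essentially complete: connectedness gives a local algebra $H^0(C,\Oh_C)$, a socle element $f$ with $f^2=0$ produces via $\Ann(f)$ a decomposition contradicting $1$-connectedness, and duality with $\sHom(\om_C,\om_C)\iso\Oh_C$ finishes it.

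For (ii) and (iii), however, the two ``obstacles'' you defer are not bookkeeping; they are the content of the theorem. First, the vanishing lemma you invoke (the paper's Proposition \ref{h1=0}(i)) can never apply to $\om_C\otimes\sI_Z$, because its hypothesis fails for $B=C$ itself: $\deg_C\om_C(-Z)=2p_a(C)-2-\ell<2p_a(C)-1$, and indeed $h^1(C,\sI_Z\om_C)\geq h^1(C,\om_C)=1$ always. What is actually needed is the dual statement that $h^0(C,\sHom(\sI_Z,\Oh_C))=1$, and the proof of that proceeds by taking a section not proportional to the inclusion $\sI_Z\into\Oh_C$, applying the factorization of Proposition \ref{lem:adj} to see whether it degenerates on a subcurve $B$, and only then using $(C-B)\cdot B\geq \ell+1$ to exclude the degenerate case; your phrase ``the only subcurves that can contribute'' gestures at this but no mechanism is given for how a subcurve ``contributes'' to $h^1$. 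Second, the final step ``a nonconstant section of $\Oh_C(Z)$, i.e.\ a finite morphism $C\to\proj^1$ of degree $\leq\ell$'' is a genuine leap on a possibly non-reduced Gorenstein curve: $Z$ need not be Cartier, $\sHom(\sI_Z,\Oh_C)$ is a priori only rank-one torsion-free, and a degree-$\ell$ sheaf with two sections yields a finite degree-$\ell$ morphism only after one proves it is invertible and globally generated --- for $\ell=2$ this is exactly the characterization of honestly hyperelliptic curves that occupies \S 3 of \cite{CFHR}. So the proposal is a correct roadmap with the decisive steps left open, not a proof.
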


 \begin{PROP}[ \cite{CFHR}, Lemma 2.4 ]\label{lem:adj} Let $C$ be a
 projective scheme of pure dimension 1, let $\sF$ be a coherent
sheaf on $C$, and $\fie\colon\sF\to\om_C$ a nonvanishing map of $\Oh_C$-modules. Set
$\sJ=\Ann\fie\subset\Oh_C$, and write $B\subset C$ for the subscheme
defined by $\sJ$. Then $B$ is Cohen--Macaulay and $\fie$ has a canonical
factorization of the form
 \begin{equation}
\sF\onto\sF_{|B}\into\om_B=\sHom_{\Oh_C}(\Oh_B,\om_C)\subset\om_C,
 \label{eq:adj}
\nonumber
 \end{equation}
where $\sF_{|B}\into\om_B$ is generically onto.
 \end{PROP}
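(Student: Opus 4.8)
The plan is to extract everything from the image subsheaf $M:=\im\fie\subseteq\om_C$, for which $\sJ=\Ann\fie=\Ann M$ and $\Oh_B=\Oh_C/\sJ$. The starting observation is that, since $C$ is Cohen--Macaulay of pure dimension one, its dualizing sheaf $\om_C$ is a maximal Cohen--Macaulay sheaf, so it has no zero-dimensional associated points; its associated points are exactly the generic points of the one-dimensional components of $C$. The same then holds for every nonzero subsheaf of $\om_C$, in particular for $M$ and for $W:=(0:_{\om_C}\sJ)=\sHom_{\Oh_C}(\Oh_B,\om_C)$. In particular $\Supp M=\Supp\Oh_B$ is pure of dimension $1$, so $B$ has pure dimension~$1$.

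Next I would show that $B$ is Cohen--Macaulay, i.e.\ that $\Oh_B$ has no embedded points; this is the technical heart. First note $M\subseteq W$ (as $M$ is killed by $\sJ$), whence $\sJ\subseteq\Ann W\subseteq\Ann M=\sJ$, so $\Ann W=\sJ$ and $W$ is a \emph{faithful} $\Oh_B$-module. Suppose $\Oh_B$ had an embedded point, i.e.\ the maximal ideal $\mathfrak m$ at some closed point were an associated prime of $\Oh_B$; choose $x\neq0$ in $\Oh_B$ with $\mathfrak m x=0$. Faithfulness yields a local section $w$ of $W$ with $xw\neq0$, and then $xw$ is a nonzero section of $W$ annihilated by $\mathfrak m$, i.e.\ a zero-dimensional associated point of $W$ — contradicting the previous paragraph. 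Hence $\Oh_B$ is Cohen--Macaulay.

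With $B$ now a closed Cohen--Macaulay subscheme of $C$ of the same pure dimension $1$, duality for closed immersions in codimension $0$ gives $\om_B\iso\sExt^{0}_{\Oh_C}(\Oh_B,\om_C)=\sHom_{\Oh_C}(\Oh_B,\om_C)=W$, identifying $\om_B$ with the subsheaf $W\subseteq\om_C$. Since $\fie(\sJ\sF)=\sJ M=0$, the map $\fie$ kills $\sJ\sF$ and therefore factors through the restriction $\sF\onto\sF_{|B}=\sF\otimes_{\Oh_C}\Oh_B$, the induced map $\sF_{|B}\to\om_B=W\subseteq\om_C$ having image exactly $M$; this is the asserted factorization (and replacing $\sF_{|B}$ by $\im\fie\iso\sF/\ker\fie$ turns the middle arrow into an honest inclusion).

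It remains to see that $\sF_{|B}\to\om_B$ is generically onto, and here I expect local duality at the generic points to be the decisive input. At a generic point $\eta$ of a component of $B$ the ring $\Oh_{C,\eta}$ is Artinian, $\om_{B,\eta}=\sHom(\Oh_{B,\eta},\om_{C,\eta})$ is the dualizing (Matlis-dual) module of the Artinian local ring $\Oh_{B,\eta}$, and $M_\eta\subseteq\om_{B,\eta}$ is a faithful $\Oh_{B,\eta}$-submodule since $\Ann_{\Oh_{B,\eta}}M_\eta=\sJ_\eta/\sJ_\eta=0$. Matlis duality converts the inclusion $M_\eta\into\om_{B,\eta}$ into a surjection $\Oh_{B,\eta}\onto D(M_\eta)$ whose kernel is the annihilator of $M_\eta$; faithfulness forces $D(M_\eta)=\Oh_{B,\eta}$, hence $M_\eta=\om_{B,\eta}$. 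Thus $\im\fie$ and $\om_B$ agree at every generic point of $B$, which is the required generic surjectivity. The two local-algebra steps — ruling out embedded points of $\Oh_B$ and the Matlis-duality computation at the generic points — are the real obstacles, whereas the factorization and the codimension-$0$ identification $\om_B=\sHom_{\Oh_C}(\Oh_B,\om_C)$ are formal once $B$ is known to be Cohen--Macaulay.
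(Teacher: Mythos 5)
The paper does not prove this Proposition: it is quoted verbatim from [CFHR, Lemma 2.4] and used as a black box, so there is no internal proof to compare against. Your argument is correct and is essentially the standard one from that source: the key inputs are that $\om_C$ has no $0$-dimensional associated points (so the faithful $\Oh_B$-module sitting inside it rules out embedded points of $B$) and Matlis duality at the Artinian local rings of the generic points of $B$ to get generic surjectivity; the only cosmetic difference is that you run the embedded-point argument through $W=\sHom(\Oh_B,\om_C)$, whereas one can use $\im\fie$ itself, which is already a faithful $\Oh_B$-module. Your parenthetical remark that the middle arrow $\sF_{|B}\to\om_B$ need not literally be injective (one should read $\sF_{|B}$ as its image $\im\fie$) is a fair observation about the statement, not a gap in your proof.
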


  \begin{PROP}[ \cite{CF}]\label{h1=0}Let $C$ be a
	 pure  1-dimensional
 projective scheme, let $\sF$ be a rank 1 torsion free sheaf on $C$.

  \begin{enumerate}
	 \renewcommand\labelenumi{(\roman{enumi})}
	
 \item
If
 $ \deg (\sF)_{|B} \geq  2 p_a(B) - 1$
 for every subcurve $B \subseteq C$ then $H^1(C, \sF)=0$.

  \item
If $\sF$ is invertible and
 $ \deg (\sF)_{|B} \geq  2 p_a(B) $
 for every subcurve $B \subseteq C$ then $|\sF|$ is base point free.

   \item
If $\sF$ is invertible and
 $ \deg (\sF)_{|B} \geq  2 p_a(B) +1$
 for every subcurve $B \subseteq C$ then $\sF$ is very ample on $C$.

  \end{enumerate}

 \end{PROP}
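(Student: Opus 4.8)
Beginning with (i), whose role is to power the other two parts, the plan is to argue by contradiction using Serre duality together with the adjunction Proposition \ref{lem:adj}. Since $C$ is Cohen--Macaulay with dualizing sheaf $\om_C$, Serre duality gives $H^1(C,\sF)^{\vee}\iso\Hom_{\Oh_C}(\sF,\om_C)$, so if $H^1(C,\sF)\neq 0$ there is a nonzero homomorphism $\fie\colon\sF\to\om_C$. Applying Proposition \ref{lem:adj} to $\fie$ produces a nonzero Cohen--Macaulay subcurve $B\subseteq C$ together with a factorization $\sF\onto\sF_{|B}\into\om_B$ in which the inclusion is generically onto. As $\sF_{|B}$ and $\om_B$ are both rank $1$ torsion free on $B$ and agree generically, comparing Euler characteristics yields $\deg\sF_{|B}\le\deg\om_B=2p_a(B)-2$. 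This contradicts the hypothesis $\deg\sF_{|B}\ge 2p_a(B)-1$, so $H^1(C,\sF)=0$.

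For (ii) and (iii) I would reduce to (i) by twisting $\sF$ down along the ideal sheaf of a short subscheme. Fix a closed point $P\in C$ with ideal $\sI_P\subset\Oh_C$. Since $\sF$ is invertible, the sequence $0\to\sF\otimes\sI_P\to\sF\to\sF\otimes k(P)\to 0$ is exact and $\sF\otimes\sI_P$ is rank $1$ torsion free. On every subcurve $B$ one has $\deg(\sF\otimes\sI_P)_{|B}\ge\deg\sF_{|B}-1\ge 2p_a(B)-1$, so (i) gives $H^1(C,\sF\otimes\sI_P)=0$; the cohomology sequence then shows that $H^0(C,\sF)$ surjects onto $\sF\otimes k(P)$, i.e. $P$ is not a base point. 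As $P$ is arbitrary, $|\sF|$ is base point free, proving (ii). For (iii) one runs the same argument with $Z\subset C$ an arbitrary length $2$ subscheme (a pair of distinct reduced points to separate points, or a length $2$ scheme concentrated at one point to separate tangent directions). Then $\deg(\sF\otimes\sI_Z)_{|B}\ge\deg\sF_{|B}-2\ge 2p_a(B)-1$, so (i) forces $H^1(C,\sF\otimes\sI_Z)=0$ and hence the surjectivity of $H^0(C,\sF)\to H^0(Z,\sF\otimes\Oh_Z)$; since this holds for every such $Z$, the sheaf $\sF$ is very ample.

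The cohomological reduction in (ii) and (iii) is routine; the real content, and the step I expect to be the main obstacle, is the degree book-keeping hidden in part (i). One has to make precise the meaning of $\deg\sF_{|B}$ for a merely torsion free sheaf, and check that the torsion free quotient produced by Proposition \ref{lem:adj} is exactly the one governed by the hypothesis, so that $\deg\sF_{|B}\le 2p_a(B)-2$ genuinely contradicts $\deg\sF_{|B}\ge 2p_a(B)-1$ rather than being reconciled with it after absorbing some torsion length. The analogous point recurs in (ii)--(iii) when computing $\deg(\sI_Z)_{|B}$ on subcurves meeting $Z$, where one must again track the length of any torsion introduced by restricting $\sI_Z$ to $B$; getting these length corrections consistently on every $B\subseteq C$, including reducible and non-reduced $B$, is where the argument requires the most care.
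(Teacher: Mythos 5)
Your proposal is correct, but note that the paper does not prove this proposition at all: it is quoted from \cite{CF} as a known result. Your argument --- Serre duality plus the adjunction factorization of Proposition \ref{lem:adj} to force $\deg\sF_{|B}\le 2p_a(B)-2$ for part (i), then twisting by the ideal of a cluster of length $1$ or $2$ and invoking (i) for parts (ii) and (iii) --- is exactly the standard proof in the cited sources, and you correctly identify the only delicate point, namely that the degree of the torsion-free quotient on $B$ (which is the quantity the hypothesis controls) is what gets bounded by $\chi(\om_B)$.
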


 As we mentioned in the Introduction, our approach to the analysis of the ring $R(C, L)=\bigoplus_{k\geq 0} H^0(C,L^{\otimes k})$ for a line bundle $L$ builds on the generalization of Clifford's Theorem proved by the authors in \cite{cliff}. In the rest of this section we recall the  main results we need from \cite{cliff}, namely,  the notion of subcanonical cluster and  Clifford's Theorem,
and we prove some technical lemmas on the cohomology of rank one torsion free sheaves.

\begin{DEF} \label{canonical cluster}
 A {\em cluster} $S$ of {\em degree}
$r$ is a $0$-dimensional subscheme of $C$ with
$\length\Oh_S=\dim_k\Oh_S=r$. A cluster $S \subset C$ is {\em subcanonical} if the space $H^0(C, \sI_S \omega_C)$ contains a generically invertible section, i.e., a section $s_0$ which does not vanish on any subcurve of $C$.
 \end{DEF}

 \begin{TEO}[\cite{cliff}, Theorem A]\label{clifford} Let $C$ be a projective  2-connected curve either reduced with planar singularities  or contained in a smooth  algebraic surface, and let $S \subset C$ be a  subcanonical cluster.

Assume that $S$ is a Cartier divisor or alternatively that there exists
 a generically invertible section $H \in H^0(C, \sI_S K_C)$ such that $\div(H) \cap \Sing(C_{\red})=\emptyset$.

 Then
 $$h^0(C,\sI_S K_C) \leq p_a(C) - \frac{1}{2} \deg (S).$$
 Moreover if equality holds then the pair $(S, C)$ satisfies one of the following assumptions:

 \begin{enumerate}
  \renewcommand\labelenumi{(\roman{enumi})}
\item $S= 0,\, {K_C}$;
\item  $C$ is honestly hyperelliptic and $S$ is a multiple of the honest $g_{2}^{1}$;
\item $C$ is 3-disconnected (i.e.,  there is
a decomposition $C=A+B$ with  \mbox{$A\cdot B=2$)}. \\
\end{enumerate}
 \end{TEO}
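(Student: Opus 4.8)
The plan is to reduce the statement to a symmetric Clifford-type inequality for global sections, and then to prove that inequality by a multiplication-map argument adapted to non-integral curves. First I would record the Riemann--Roch bookkeeping. Since $C$ is Gorenstein of arithmetic genus $p_a$ and $\deg(\sI_S\omega_C)=(2p_a-2)-\deg S$, one gets $\chi(\sI_S\omega_C)=p_a-1-\deg S$, while Serre duality (using that $\omega_C$ is invertible) gives $h^1(\sI_S\omega_C)=h^0(\sHom(\sI_S,\Oh_C))=:h^0(\Oh_C(S))$. Hence the asserted bound $h^0(\sI_S\omega_C)\le p_a-\tfrac12\deg S$ is equivalent to the symmetric \emph{Clifford inequality}
$$h^0(\sI_S\omega_C)+h^0(\Oh_C(S))\le p_a+1=h^0(\omega_C)+1.$$
Both hypotheses on $S$ serve to make $\sI_S$ invertible: either $S$ is Cartier by assumption, or the generically invertible $H$ with $\div(H)\cap\Sing(C_{\red})=\emptyset$ forces $\Supp S$ into the smooth locus of $C_{\red}$. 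In either case $\sI_S$, $\Oh_C(S)$ and $\sI_S\omega_C=\omega_C(-S)$ are genuine line bundles and the local pairing $\omega_C(-S)\otimes\Oh_C(S)\to\omega_C$ is defined everywhere.

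The heart of the argument is to prove the Clifford inequality from the multiplication map
$$\mu\colon H^0(\sI_S\omega_C)\otimes H^0(\Oh_C(S))\longrightarrow H^0(\omega_C),$$
by showing $\dim\im\mu\ge h^0(\sI_S\omega_C)+h^0(\Oh_C(S))-1$ and using $\im\mu\subseteq H^0(\omega_C)$. On an integral curve this is the classical base-point-free pencil trick, which rests on the ring of sections having no zero divisors; the obstruction on our reducible, possibly non-reduced $C$ is precisely that a section may vanish identically on a subcurve. This is exactly what the subcanonical hypothesis repairs: the generically invertible section $s_0\in H^0(\sI_S\omega_C)$ is a non-zero-divisor, so multiplication by $s_0$ embeds $H^0(\Oh_C(S))$ into $H^0(\omega_C)$, while the canonical $1\in H^0(\Oh_C)\subseteq H^0(\Oh_C(S))$ embeds $H^0(\sI_S\omega_C)$ into $H^0(\omega_C)$. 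I would then run the pencil trick with $s_0$ to bound $\dim\bigl(s_0\!\cdot\!H^0(\Oh_C(S))\ \cap\ H^0(\sI_S\omega_C)\bigr)\le 1$, invoking $2$-connectedness through Theorem \ref{thm:curve} (so that $|K_C|$ is base point free) to ensure the relevant kernels are supported where expected and do not acquire extra dimension along a component.

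The ``moreover'' part is the equality analysis, extracting cases (i)--(iii). Equality forces both $\im\mu=H^0(\omega_C)$ and the pencil-trick bound to be sharp. I would argue that sharpness makes the sub-line-bundle of $\omega_C$ spanned by $s_0$ and one further section either trivial, yielding the degenerate extremes $S=0$ or $S=K_C$ of (i); or a base-point-free pencil of degree $2$, i.e. an honest $g_2^1$, in which case $C$ is honestly hyperelliptic and $S$ is forced to be a multiple of this pencil, giving (ii). The remaining possibility is that the non-zero-divisor/connectedness input is only borderline, which should happen exactly when there is a decomposition $C=A+B$ with $A\cdot B=2$: there the global sections can be prescribed almost independently on $A$ and $B$, producing the extra coincidence responsible for equality, and this is the $3$-disconnected case (iii).

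I expect the main obstacle to be this equality discussion rather than the inequality itself. Running the base-point-free pencil trick on a non-reduced, reducible curve requires controlling sections that vanish on subcurves, and the delicate point is to show that \emph{sharpness} of the bound leaves no room beyond the three listed configurations; in particular, separating the honestly hyperelliptic case from the $3$-disconnected one, and verifying in case (ii) that the cluster is genuinely a multiple of the $g_2^1$, is where the careful interplay of $2$-connectedness with the generically invertible section is indispensable.
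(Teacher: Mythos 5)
You should first note that the paper contains no proof of Theorem \ref{clifford}: it is imported verbatim from \cite{cliff} (Theorem A there), so the only available comparison is with that reference, where the argument is a lengthy induction on decompositions $C=C_1+C_2$ of the (possibly reducible, non-reduced) curve together with a local study of clusters and the dualizing sheaf --- a genuinely different route from the global multiplication-map argument you propose. Judged on its own terms, your sketch has two real gaps. First, the claim that the hypotheses make $\sI_S$ invertible is false in the non-reduced case. If $C=2\Gamma$ with $\Gamma$ smooth, then $\Sing(C_{\red})=\emptyset$ and the condition $\div(H)\cap\Sing(C_{\red})=\emptyset$ is vacuous, yet every point of $C$ lies on the double component and a typical cluster there (already a reduced point, whose maximal ideal in $\Ka[x,y]/(y^2)$ needs two generators) is not Cartier. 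So the reduction to honest line bundles $\Oh_C(S)$ and $\omega_C(-S)$ simply does not cover a substantial part of the theorem's scope --- and the non-reduced case is precisely the one this paper cares about (multiple fibres, Example \ref{esempio}).

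Second, and more seriously, the inequality $\dim\im\mu\ge h^0(\sI_S\omega_C)+h^0(\sHom(\sI_S,\Oh_C))-1$ is exactly the step that fails for non-integral curves, and your argument does not establish it. The existence of one non-zero-divisor $s_0$ gives you two subspaces $s_0\cdot H^0(\sHom(\sI_S,\Oh_C))$ and $H^0(\sI_S\omega_C)$ of $H^0(\omega_C)$ of the right dimensions, but the bound you need is that their intersection is one-dimensional, i.e.\ that $s_0t\in H^0(\sI_S\omega_C)$ forces $t$ to be a scalar. On a reducible or non-reduced curve $t$ may be non-constant (vanishing on a subcurve, or differing from a scalar by a nilpotent section), and excluding this is precisely where $2$-connectedness has to do quantitative work; the sentence ``invoking $2$-connectedness \dots to ensure the relevant kernels are supported where expected'' is a placeholder for the actual content of the theorem, not a proof. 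The equality analysis is likewise entirely conjectural: nothing in the sketch produces the decomposition $C=A+B$ with $A\cdot B=2$ in case (iii), distinguishes it from the honestly hyperelliptic case, or shows in case (ii) that $S$ is a multiple of the honest $g_{2}^{1}$. As it stands the proposal reproduces the classical integral-curve proof and defers all the difficulties specific to the singular setting.
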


 \begin{REM}\label{cliffrem} Let $C$ and $S$ be as in Theorem  \ref{clifford}. Then Riemann-Roch implies that
 $$h^0(C,\sI_S K_C) + h^1(C,\sI_S K_C) \leq  p_a(C) +1$$
 and equality holds if one of the three cases listed in Theorem  \ref{clifford} is satisfied.
 \end{REM}

 \begin{REM} If $Z_0 \subset Z$ are clusters, then the natural restriction map $\Oh_Z \onto \Oh_{Z_0}$ induces an inclusion $\Ext^1(\Oh_{Z_0}, \Oh_C) \into \Ext^1(\Oh_{Z}, \Oh_C)$.
 \end{REM}

  \begin{LEM}\label{estensione} Let $C$ be a Gorenstein curve and $Z$ a cluster. Assume that   there exists an extension  $\xi \in \Ext^1(\Oh_{Z}, \Oh_C)$ such that $\xi \notin \Ext^1(\Oh_{Z_0}, \Oh_C)$ for every proper subcluster $Z_0 \subsetneq Z$. Then the corresponding extension of sheaves can be written as
$$ 0 \to \Oh_C \to \sHom (\sI_Z, \Oh_C) \to \Oh_Z \to 0.$$
 \end{LEM}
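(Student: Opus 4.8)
The plan is to realize the extension determined by $\xi$ as a subsheaf of the canonical reflexive hull $\sHom(\sI_Z, \Oh_C)$ and then to use the minimality hypothesis to force equality. I would begin by dualizing the structure sequence of $Z$: applying $\sHom(-,\Oh_C)$ to $0\to\sI_Z\to\Oh_C\to\Oh_Z\to0$, and using $\sHom(\Oh_Z,\Oh_C)=0$ (a torsion sheaf into a torsion-free one) together with $\sExt^1(\Oh_C,\Oh_C)=0$, produces the canonical exact sequence
$$0 \to \Oh_C \to \sHom(\sI_Z, \Oh_C) \to \sExt^1(\Oh_Z, \Oh_C) \to 0.$$
Since $C$ is Gorenstein with $\om_C$ invertible, local duality gives, for every cluster $W$, that $\sExt^1(\Oh_W,\Oh_C)$ is supported on $W$ with $\length\sExt^1(\Oh_W,\Oh_C)=\length\Oh_W$; I will use this length identity repeatedly.

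Write $0\to\Oh_C\xrightarrow{\iota}\sE\xrightarrow{\pi}\Oh_Z\to0$ for the extension corresponding to $\xi$. The first key step is to show $\sE$ is torsion-free, and this is the only place the hypothesis is used. Suppose the torsion subsheaf $T\subset\sE$ were nonzero. Since $\Oh_C$ is torsion-free, $T\cap\iota(\Oh_C)=0$, so $\pi$ maps $T$ isomorphically onto a nonzero subsheaf $\bar T\subseteq\Oh_Z$, and $\Oh_Z/\bar T=\Oh_{Z_0}$ for a proper subcluster $Z_0\subsetneq Z$. Passing to $\sE/T$ yields an extension $0\to\Oh_C\to\sE/T\to\Oh_{Z_0}\to0$ together with a morphism of extensions which is the identity on $\Oh_C$ and the restriction $\Oh_Z\twoheadrightarrow\Oh_{Z_0}$ on the right. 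By the standard interpretation of such a morphism, $\xi$ is the image of the class of $\sE/T$ under $\Ext^1(\Oh_{Z_0},\Oh_C)\to\Ext^1(\Oh_Z,\Oh_C)$, which by the Remark preceding the statement is the inclusion $\Ext^1(\Oh_{Z_0},\Oh_C)\into\Ext^1(\Oh_Z,\Oh_C)$. Hence $\xi\in\Ext^1(\Oh_{Z_0},\Oh_C)$, contradicting minimality. Therefore $T=0$ and $\sE$ is torsion-free of rank $1$.

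Granting torsion-freeness, the natural map $\sE\to\sE^{\vee\vee}$ (with $(-)^{\vee}=\sHom(-,\Oh_C)$) is injective. Dualizing the extension once gives $0\to\sE^{\vee}\to\Oh_C\xrightarrow{\partial}\sExt^1(\Oh_Z,\Oh_C)$ with $\partial(1)=\xi$ (again using $\sHom(\Oh_Z,\Oh_C)=0$ and $\sHom(\Oh_C,\Oh_C)=\Oh_C$); since $\partial$ is $\Oh_C$-linear, $\sE^{\vee}=\Ann_{\Oh_C}(\xi)=:\sI'$, an ideal sheaf containing $\sI_Z$ because $\sExt^1(\Oh_Z,\Oh_C)$ is an $\Oh_Z$-module. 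Thus $\sE^{\vee\vee}=\sHom(\sI',\Oh_C)$, which by the first step applied to the subcluster $Z'\subseteq Z$ defined by $\sI'$ fits in $0\to\Oh_C\to\sE^{\vee\vee}\to\sExt^1(\Oh_{Z'},\Oh_C)\to0$ with quotient of length $\length\Oh_{Z'}$. Naturality of biduality identifies the two copies of $\Oh_C$, so $\Oh_Z=\sE/\Oh_C$ injects into $\sE^{\vee\vee}/\Oh_C$, giving $\length\Oh_Z\leq\length\Oh_{Z'}$; combined with $Z'\subseteq Z$ this forces $\sI'=\sI_Z$. Hence $\sE^{\vee\vee}=\sHom(\sI_Z,\Oh_C)$, and $\sE\hookrightarrow\sHom(\sI_Z,\Oh_C)$ is an inclusion of subsheaves both containing $\Oh_C$ with quotients of equal length $\deg Z$, so it is an isomorphism, which gives the asserted description of the extension.

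The hard part is the torsion-freeness step: the whole argument turns on correctly exhibiting the morphism of extensions onto a proper subcluster and matching the induced map with the inclusion of the Remark, so that minimality can be invoked. Once $\sE$ is known to be torsion-free, the identification $\sE\cong\sHom(\sI_Z,\Oh_C)$ is a reflexive-hull computation controlled entirely by the length identity for $\sExt^1$ on the Gorenstein curve $C$.
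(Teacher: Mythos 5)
Your argument is correct and follows essentially the same route as the paper: first kill the torsion of $E_{\xi}$ by showing a nonzero torsion subsheaf would push the class $\xi$ into $\Ext^1(\Oh_{Z_0},\Oh_C)$ for a proper subcluster $Z_0$, then identify $E_{\xi}$ with $\sHom(\sI_Z,\Oh_C)$ by dualizing. The only (harmless) difference is in the second step, where the paper simply quotes reflexivity of rank‑one torsion-free sheaves on a Gorenstein curve together with $\sExt^1(\Oh_Z,\Oh_C)\iso\Oh_Z$ and $\sExt^1(E_{\xi},\Oh_C)=0$, whereas you re-derive the identification $E_{\xi}\iso E_{\xi}^{\vee\vee}=\sHom(\sI_Z,\Oh_C)$ by hand via the length identity $\length\sExt^1(\Oh_W,\Oh_C)=\length\Oh_W$.
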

 \begin{proof}
 Consider an extension corresponding to $\xi$
\begin{equation}\label{E_xi}0 \to \Oh_C \to E_{\xi}\to \Oh_Z \to 0.\end{equation}

 We prove first that $E_{\xi}$ is torsion free. Indeed, if $\Tor (E_{\xi}) \neq 0$ 
  then there exists a subcluster $Z_0 \subset Z$ and a sheaf $E_0 \cong E_{\xi} / \Tor(E_{\xi})$ which fits in the following commutative diagram:

 $$\xymatrix{& & \Tor(E_{\xi})\ar[r]^{\iso} \ar[d] & \Tor(E_{\xi}) \ar[d] \\
 0 \ar[r]&\Oh_C \ar[r] \ar[d]& E_{\xi}\ar[r]\ar[d] & \Oh_Z\ar[d] \ar[r] & 0\\
0 \ar[r]& \Oh_C \ar[r]\ar[d] & E_{0}\ar[r]\ar[d] & \Oh_{Z_0}\ar[d] \ar[r] & 0\\
& 0 & 0 & 0&}$$
In particular there exists a proper subcluster $Z_0$ such that the extension corresponding to  $E_0$ in $\Ext^1(\Oh_{Z_0}, \Oh_C)$ corresponds to $\xi$, which is impossible.\\

Since $E_{\xi}$ is a rank 1 torsion free sheaf it is reflexive, i.e., there is a natural isomorphism $\sHom(\sHom(E_{\xi}, \Oh_C), \Oh_C) \cong E_{\xi}$. Dualizing sequence (\ref{E_xi}) we see that $\sHom(E_{\xi}, \Oh_C) \cong \sI_Z$ since
$\sExt^1(\Oh_Z , \Oh_C)\iso \Oh_Z$ and  $\sExt^1( E_{\xi} , \Oh_C)=0$, 
  hence $E_{\xi} \cong  \sHom (\sI_Z, \Oh_C) $.

\end{proof}

\begin{REM}  Throughout the paper we repeatedly use the natural isomorphisms  $\Oh_S \iso \Oh_S \cdot L$  and
$  H^0(S, \Oh_S)^{\ast} \iso  \Ext^1(\Oh_{S}, \omega_C \otimes L^{-1}) \iso  \Ext^1(\Oh_{S} \cdot L , \omega_C \otimes L^{-1})  $
for every cluster $S$ on $C$ and for every invertible sheaf $L$.

\end{REM}

A useful tool in the analysis of the multiplication map $ H^0(C, L)^{\otimes 2} \to H^0(C, L^{\otimes 2})$ is the restriction to a suitable cluster $S$.
 Indeed the composition of the multiplication map
$ H^0(C, L)^{\otimes 2} \to H^0(C, L^{\otimes 2})$
with the   evaluation map
$H^0(C, L^{\otimes 2}) \to H^0(S, \Oh_S) $
yields a natural map
$H^0(C, L) \otimes H^0(C,L) \to H^0(S, \Oh_S).$

\begin{LEM}\label{non surj} Let $C$ be a Gorenstein curve and $L$ an effective line bundle on $C$. Let $S$  be a cluster such that the restriction map
$$H^0(C, L) \otimes H^0(C,L) \to H^0(S, \Oh_S)$$
is not surjective. Then there exists a nonempty  subcluster $S_0 \subseteq S$ such that
$$h^0(C, L) + h^1(C, L) \leq h^0(C, \sI_{S_0} L) + h^1(C, \sI_{S_0} L).$$
\end{LEM}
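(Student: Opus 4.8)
The plan is to dualize the hypothesis and reinterpret the resulting kernel vector as a symmetric pairing on the Artinian algebra $\Oh_S$ for which the image of $H^0(C,L)$ is totally isotropic. Since $\mu\colon H^0(C,L)\otimes H^0(C,L)\to H^0(S,\Oh_S)$ is not surjective, over a field its transpose is not injective. Using the identifications $H^0(S,\Oh_S)^{\ast}\iso\Ext^1(\Oh_S,\omega_C\otimes L^{-1})$ (Remark above) and $H^0(C,L)^{\ast}\iso H^1(C,\omega_C\otimes L^{-1})$ (Serre duality on the Gorenstein curve $C$), that transpose is exactly the map $\varphi$ of the introduction, so I can pick a nonzero $\xi\in\Ext^1(\Oh_S,\omega_C\otimes L^{-1})$ with $\varphi(\xi)=0$. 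Viewing $\xi$ as a $\Ka$-linear form on the ring $\Oh_S$, the relation $\varphi(\xi)=0$ says precisely that $\xi$ kills every product $\mu(s\otimes t)=(st)_{|S}$; that is, $\xi$ vanishes on $W\cdot W$, where $W=\operatorname{im}\bigl(H^0(C,L)\to H^0(S,\Oh_S)\bigr)$ and the product is taken in $\Oh_S$.

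Next I would shrink the support of $\xi$. Among the subclusters $Z\subseteq S$ with $\xi\in\Ext^1(\Oh_Z,\omega_C\otimes L^{-1})\subseteq\Ext^1(\Oh_S,\omega_C\otimes L^{-1})$, choose a minimal one $S_0$ (it exists because $\length$ is finite, and $S_0\neq0$ since $\xi\neq0$). Because the inclusions $\Ext^1(\Oh_{Z_0},-)\into\Ext^1(\Oh_Z,-)$ of the Remark are dual to the restrictions $\Oh_Z\onto\Oh_{Z_0}$, "$\xi$ is supported on $Z_0$" is the same as "$\xi$ factors through $\Oh_{Z_0}$", i.e. "$\xi$ vanishes on the ideal $\sI_{Z_0}/\sI_{S_0}$". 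Hence minimality of $S_0$ means that $\ker\xi$ contains no nonzero ideal of $A:=\Oh_{S_0}$. Now introduce the symmetric $\Ka$-bilinear form $\langle f,g\rangle:=\xi(fg)$ on $A$. Its radical $\{f:\xi(fg)=0\ \forall g\}$ is the largest ideal of $A$ contained in $\ker\xi$, so minimality of $S_0$ forces this form to be nondegenerate (equivalently $A$ is Frobenius, $S_0$ Gorenstein; this is the geometric content of Lemma \ref{estensione}, which identifies the corresponding extension with $\sHom(\sI_{S_0},\omega_C\otimes L^{-1})$). Transporting $\varphi(\xi)=0$ to $S_0$ along the ring surjection $\Oh_S\onto\Oh_{S_0}$, the set $W_0:=\operatorname{im}\bigl(H^0(C,L)\to\Oh_{S_0}\bigr)$ satisfies $\langle W_0,W_0\rangle=0$, i.e. $W_0$ is totally isotropic. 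For a nondegenerate form on a space of dimension $\deg S_0$ one then has $W_0\subseteq W_0^{\perp}$, whence
$$\dim_\Ka W_0\le \tfrac12\deg S_0.$$

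Finally I convert this into the cohomological inequality. From the long exact sequence of $0\to\sI_{S_0}L\to L\to\Oh_{S_0}\to0$ (using $\Oh_{S_0}\cdot L\iso\Oh_{S_0}$) one reads $\dim_\Ka W_0=h^0(C,L)-h^0(C,\sI_{S_0}L)$ and $h^1(C,\sI_{S_0}L)-h^1(C,L)=\deg S_0-\dim_\Ka W_0$, so that
$$h^0(C,\sI_{S_0}L)+h^1(C,\sI_{S_0}L)=h^0(C,L)+h^1(C,L)+\deg S_0-2\dim_\Ka W_0.$$
Thus the bound $\dim_\Ka W_0\le\frac12\deg S_0$ is exactly equivalent to the desired $h^0(C,L)+h^1(C,L)\le h^0(C,\sI_{S_0}L)+h^1(C,\sI_{S_0}L)$, with $S_0$ nonempty. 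The main obstacle is the middle paragraph: recognizing that the minimality of the cluster supporting $\xi$ is precisely nondegeneracy of the pairing $\xi(fg)$, and that non-surjectivity of $\mu$ becomes isotropy of $W_0$; once this is set up the dimension estimate is the elementary isotropic-subspace bound, and the rest is bookkeeping with Serre duality and the long exact sequence.
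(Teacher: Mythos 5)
Your proof is correct, and while it shares the paper's overall skeleton --- dualize the non-surjective multiplication map to produce a nonzero $\xi$ killed by $\varphi$, pass to a minimal subcluster $S_0$, and convert a dimension bound on $W_0=\im\bigl(H^0(C,L)\to H^0(S_0,\Oh_{S_0})\bigr)$ into the stated inequality via the long exact sequence of $0\to\sI_{S_0}L\to L\to\Oh_{S_0}\to 0$ --- it replaces the central technical step by a genuinely different, more elementary one. The paper takes $S_0$ minimal among subclusters on which restriction fails to be surjective, invokes Lemma \ref{estensione} (torsion-freeness and reflexivity of the rank-one sheaf $E_\xi$, plus an $\sExt$ computation) to identify the extension with $\sHom(\sI_{S_0}L,\omega_C)$, and then obtains $h^0(C,E_\xi)=h^1(C,\sI_{S_0}L)$ by Serre duality, the inequality coming from $\im(r)\subseteq\im(f_\xi)$. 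You instead fix $\xi$ and minimize its support, note that minimality says exactly that $\ker\xi$ contains no nonzero ideal of the Artinian ring $\Oh_{S_0}$, hence that the Frobenius-type pairing $\langle f,g\rangle=\xi(fg)$ is nondegenerate (its radical being the largest ideal inside $\ker\xi$), and conclude with the isotropic-subspace bound $\dim W_0\le\tfrac12\deg S_0$, which is valid in any characteristic since it uses only $W_0\subseteq W_0^{\perp}$ and $\dim W_0^{\perp}=\deg S_0-\dim W_0$. Your nondegeneracy statement is precisely what Lemma \ref{estensione} buys the authors (under Serre duality one has $\ker c_\xi=W_0^{\perp}$ and $h^0(C,E_\xi)=h^1(C,L)+\dim W_0^{\perp}$), so the two arguments compute the same number; yours avoids the sheaf $E_\xi$ altogether and makes the Gorenstein/Frobenius nature of the minimal cluster explicit, at the (shared) cost of relying on the standard identification of the Serre-duality transpose of the multiplication map with evaluation of $\xi$ against products in $\Oh_S$, which the paper likewise uses implicitly when it describes $\varphi$ via connecting homomorphisms.
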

\begin{proof} Let  $S$  be a cluster such that the restriction map $$H^0(C, L) \otimes H^0(C,L) \to H^0(S, \Oh_S)$$ is not surjective.  By Serre duality  the dual map
$$\varphi \colon  \Ext^1(\Oh_S, \omega_C \otimes L^{-1}) \to \Hom(H^0(C, L), H^1(C, \omega_C \otimes L^{-1}))$$
is not injective. The dual map $\varphi$ is given as follows: consider an element $\xi \in \Ext^1(\Oh_S, \omega_C \otimes L^{-1})$ and its corresponding extension

$$ 0 \to \omega_C \otimes L^{-1} \to E_{\xi} {\longrightarrow}  \Oh_S \to 0.$$
Let $c_{\xi} \colon H^0(S, \Oh_S)\to  H^1(C, \omega_C \otimes L^{-1}) $ be the connecting homomorphism induced by the extension. Then
the restriction map $r \colon  H^0(C, L) \to H^0(S, \Oh_S)$ induces a map $\varphi_{\xi} = c_{\xi} \circ r  \colon H^0(C, L) \to H^1(C, \omega_C \otimes L^{-1}))$ given as follows
$$\xymatrix{& && H^0(C, L) \ar[d]^r \ar[dr]^{\varphi_{\xi}}&\\
0 \ar[r]& H^0(C,\omega_C \otimes L^{-1}) \ar[r]& H^0(C,E_{\xi}) \ar[r]^{f_{\xi}}& H^0(S, \Oh_S)\ar[r]^{c_{\xi} \ \ \ \ \ \ \ } & H^1(C, \omega_C \otimes L^{-1})}$$
The map $\varphi_{\xi}$ is precisely $\varphi(\xi) \in \Hom(H^0(C, L), H^1(C, \omega_C \otimes L^{-1}))$. By definition $\varphi(\xi)=0$ if and only if $\operatorname{Im}(r) \subset \operatorname{Im} (f_{\xi})$. In particular if $\varphi(\xi)=0$ then we have $\dim \operatorname{Im}(r) \leq \dim \operatorname{Im} (f_{\xi})$ which implies that
\begin{equation}\label{inuguaglianza Clifford} h^0(C, L) + h^1(C, L) \leq h^0(C, E_{\xi}) +  h^0(C, \sI_S L).\end{equation}

In order to prove the Lemma let  ${S_0}$ be  minimal (with respect to the inclusion) among the subclusters of $S$ for which the restriction  $\Sym^2 H^0(C, L) \to H^0(S_0, \Oh_{S_0})$ fails to be surjective. This implies that if $Z \subsetneq S_0$ is any proper subcluster then the map $$\varphi_0 \colon \Ext^1(\Oh_{Z}, \omega_C \otimes L^{-1})\to\Hom(H^0(C,L), H^1(C, \omega_C \otimes L^{-1})) $$ is injective. Note that $\varphi_0$ factors through $\varphi$:
$$\xymatrix{\Ext^1(\Oh_{Z}, \omega_C \otimes L^{-1}) \ar[d]\ar[dr]^{\varphi_0}\\
\Ext^1(\Oh_{S_0}, \omega_C \otimes L^{-1})\ar[r]^-{\varphi} &  \Hom(H^0(C, L), H^1(C, \omega_C \otimes L^{-1}))}$$

By the minimality of $S_0$ if $\xi \in \Ext^1(\Oh_{S_0}, \omega_C \otimes L^{-1})$ is in  the kernel of $\varphi$, it must not belong to the image of $\Ext^1(\Oh_{Z}, \omega_C \otimes L^{-1})$  for every $Z \subsetneq S_0$. The corresponding extension $E_{\xi}$ is isomorphic to $\sHom (\sI_{S_0} , \Oh_C) \otimes \omega_C \otimes L^{-1}\iso \sHom (\sI_{S_0}L , \omega_C) $ thanks to Lemma \ref{estensione}. Thus $h^0(C, E_{\xi})=h^1(C, \sI_{S_0} L)$ by Serre duality. Inequality (\ref{inuguaglianza Clifford}) becomes
$$h^0(C, L) + h^1(C, L) \leq h^0(C, \sI_{S_0} L) + h^1(C, \sI_{S_0} L).$$\end{proof}

 \section{Noether's Theorem for  singular curves}
 The aim of this section is to prove  Noether's Theorem for singular curves. For the proof we use two main ingredients: a generalization of the \emph{free pencil trick} (see Lemma \ref{free pencil trick}), and the surjectivity of the restriction map $H^0(C, \omega_C)^{\otimes 2} \to H^0(S, \Oh_S)$ for a suitable cluster $S$.

\begin{LEM}\label{esiste free pencil} Let $C$ be a projective curve  which is either  reduced with planar singularities   or contained in a smooth  algebraic surface. Assume that $C$ is 2-connected and $p_a (C) \geq 2$.  Let  $H \in H^0(C,\omega_C)$ be a generic section.

Then there exists a cluster $S$ contained in $\operatorname{div} H$ such that the following hold:
\begin{enumerate}
\item $h^0(C, \sI_S \omega_C)=2$
\item the evaluation map $H^0(C, \sI_{S}K_C)\otimes \mathcal{O}_C \to \sI_{S}\omega_C$ is surjective.
\end{enumerate} \end{LEM}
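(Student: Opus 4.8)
The plan is to produce the desired cluster $S$ by choosing it inside $\operatorname{div} H$ for a generic section $H$, and to control $h^0(C, \sI_S \omega_C)$ by removing points from $\operatorname{div} H$ one at a time until the dimension drops to exactly $2$. First I would observe that since $C$ is $2$-connected with $p_a(C) \geq 2$, Theorem \ref{thm:curve}(ii) guarantees $|K_C|$ is base point free, so a generic $H \in H^0(C, \omega_C)$ has effective divisor $\operatorname{div} H$ which is a cluster (a $0$-dimensional scheme, away from the singular locus of $C_{\red}$ by genericity) of degree $\deg \omega_C = 2p_a(C) - 2$. The idea is that imposing the full divisor $\operatorname{div} H$ as base conditions forces $h^0(C, \sI_{\operatorname{div} H}\,\omega_C) = 1$ (only multiples of $H$ itself survive, roughly), while imposing nothing gives $h^0(C, \omega_C) = p_a(C)$. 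I would then build up a nested chain of subclusters and use that passing one point at a time changes $h^0$ by at most $1$, to extract a subcluster $S \subseteq \operatorname{div} H$ with $h^0(C, \sI_S \omega_C) = 2$ exactly. This establishes condition (1).

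The more delicate point is condition (2), the surjectivity of the evaluation map $H^0(C, \sI_S K_C) \otimes \Oh_C \to \sI_S \omega_C$, i.e.\ that the linear system $|\sI_S K_C|$ (which is a pencil by (1)) is base point free as a sub-system cutting out exactly $S$. Here I would exploit that $S$ sits inside $\operatorname{div} H$ and that $H$ itself is a generically invertible section of $H^0(C, \sI_S K_C)$, so $S$ is a subcanonical cluster and Clifford's Theorem (Theorem \ref{clifford}) and its Riemann--Roch reformulation in Remark \ref{cliffrem} apply. The strategy is to argue that if the evaluation map failed to be surjective, there would be an extra base point, which would let me enlarge $S$ to a cluster $S'$ with $\deg S' = \deg S + 1$ but still $h^0(C, \sI_{S'} K_C) = 2$; feeding the resulting two clusters into the Clifford inequality $h^0(C, \sI_S K_C) \leq p_a(C) - \tfrac{1}{2}\deg(S)$ should produce a numerical contradiction (since increasing the degree by one while keeping $h^0$ fixed violates the bound, or forces an equality case incompatible with genericity of $H$).

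I would carry out the steps in this order: (a) invoke base-point-freeness of $|K_C|$ to see $\operatorname{div} H$ is a reduced cluster supported on smooth points, with $h^0(C, \sI_{\operatorname{div} H}\omega_C)$ small; (b) run the one-point-at-a-time removal argument to land on a subcluster $S$ with $h^0 = 2$; (c) verify $S$ is subcanonical (with $H$ as the generically invertible section satisfying the hypothesis $\operatorname{div}(H) \cap \operatorname{Sing}(C_{\red}) = \emptyset$ of Theorem \ref{clifford}); (d) prove base point freeness of the pencil $|\sI_S K_C|$ by a Clifford-type numerical argument ruling out an extra base point. The main obstacle I anticipate is step (d): one must rule out the exceptional equality cases of Clifford's Theorem (the honestly hyperelliptic case and the $3$-disconnected case) or show they do not obstruct surjectivity here. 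The genericity of $H$ should be the key leverage — a generic choice ensures that any base point of the pencil would have to come from an intrinsic geometric degeneration of $C$, which I would then match against the structural list in Theorem \ref{clifford}. Controlling exactly how much the genericity of $H$ buys us, and translating "no extra base point" into the clean surjectivity of the stated evaluation map, is where the real work lies.
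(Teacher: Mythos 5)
Your step (1) is essentially the paper's: base point freeness of $|K_C|$ makes $\operatorname{div} H$ a cluster of length $2p_a(C)-2$ with $h^0(C,\sI_{\operatorname{div} H}\omega_C)=1$, and a one-point-at-a-time chain produces a subcluster with $h^0=2$. The genuine gap is in your step (d). The Clifford inequality cannot produce the contradiction you want: if $h^0(C,\sI_S K_C)=2$ and $\deg S=d$, Clifford gives $d\le 2p_a(C)-4$, and for the enlarged cluster $S'$ of degree $d+1$ with the same $h^0$ it gives $d\le 2p_a(C)-5$. These are compatible unless $S$ already achieved equality, which nothing in your construction guarantees (for a general curve the cluster you build by the chain argument has degree far below the Clifford bound). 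Worse, the lemma assumes only that $C$ is $2$-connected, so honestly hyperelliptic and $3$-disconnected curves are explicitly allowed; the exceptional cases of Theorem \ref{clifford} cannot be ``ruled out'' by genericity of $H$ because the statement must hold for such curves as well. So the numerical route has no way to close.

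The paper avoids all of this with a purely set-theoretic maximality argument, which is the missing idea: among all subclusters of $\operatorname{div} H$ with $h^0(C,\sI_S\omega_C)=2$, choose $S$ \emph{maximal with respect to inclusion} (possible since the relevant values $h^0$ range over a finite set of clusters inside $\operatorname{div} H$). If the evaluation map $H^0(C,\sI_S K_C)\otimes\Oh_C\to\sI_S\omega_C$ were not surjective, its image would be $\sI_{S_0}\omega_C$ for some strictly larger cluster $S_0\supsetneq S$; since $H$ is one of the two generating sections, $S_0$ is still contained in $\operatorname{div} H$, and since $H^0(C,\sI_{S_0}K_C)$ contains the full $2$-dimensional space $H^0(C,\sI_S K_C)$ while being contained in it, $h^0(C,\sI_{S_0}K_C)=2$. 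This contradicts maximality directly, with no appeal to Clifford's theorem or to the geometry of $C$. If you replace your step (b) by ``take $S$ maximal with $h^0=2$'' and your step (d) by this observation, the proof closes; as written, it does not.
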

\begin{proof} Since $|K_C|$ is base point free thanks to Theorem \ref{thm:curve} we may assume that $H$ is generically invertible and $\div H$ is a length $2 p_a(C)-2$ cluster. Thus
 for every integer $\nu \in \{1,\ldots, p_a(C)\}$ there exists at least one cluster $S_{\nu} \subseteq \div H$ such that $h^0(C, \sI_{S_{\nu}} \omega_C)=\nu$. In particular we may take a cluster $S$ such that \mbox{$h^0(C, \sI_S \omega_C)=2$} and $S$ is maximal up to inclusion among the clusters contained in $\operatorname{div} H$ with this property. $S$ is the desired cluster. Indeed, if it were  $S_0 \supsetneq  S$ such that the image of the  evaluation map $H^0(C, \sI_{S}K_C)\otimes \mathcal{O}_C \to \sI_{S}\omega_C$  was $\sI_{S_0} \omega_C \subsetneq \sI_{S}\omega_C$  then we would have  $h^0(C, \sI_{S_0} K_C)=2$, contradicting  the maximality of $S$.\end{proof}\\

Even though the sheaf $\sI_S \omega_C$ defined in the above Lemma is not usually a line bundle by abuse of notation we will call it a \emph{free pencil}.

\begin{LEM}\label{free pencil trick} Let the pair $(C, S)$ be as in the previous lemma. Then the map \begin{equation}\label{surj pencil} H^0(C, \sI_S \omega_C) \otimes H^0(C, \omega_C) \stackrel{m}{\longrightarrow} H^0(C, \sI_S \omega_C^{\otimes 2})\end{equation} is surjective. \end{LEM}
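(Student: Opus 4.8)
The plan is to carry out the classical base-point-free pencil trick, adapted to the torsion-free (and in general non-invertible) sheaf $\sI_S\om_C$. Set $V=H^0(C,\sI_S\om_C)$, which is $2$-dimensional by Lemma \ref{esiste free pencil}(1), and recall from Lemma \ref{esiste free pencil}(2) that the evaluation map $V\otimes\Oh_C\to\sI_S\om_C$ is surjective. Writing $M$ for its kernel we obtain
$$0\to M\to V\otimes\Oh_C\to\sI_S\om_C\to 0,$$
with $M$ a rank $1$ torsion-free sheaf. Tensoring by the invertible sheaf $\om_C$ and taking cohomology, the induced map $H^0(V\otimes\om_C)\to H^0(\sI_S\om_C^{\otimes 2})$ is exactly the multiplication map $m$ of (\ref{surj pencil}); hence $m$ is surjective if and only if the connecting map $\psi\colon H^1(M\otimes\om_C)\to V\otimes H^1(\om_C)$ is injective. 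The goal is thus to understand $M$ well enough to control these two cohomology groups.

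The first key step is to identify $M$ with $\sHom(\sI_S\om_C,\Oh_C)$. The wedge pairing $M\otimes\sI_S\om_C\to\wedge^2(V\otimes\Oh_C)\cong\Oh_C$ induces a map $M\to\sHom(\sI_S\om_C,\Oh_C)$ which is injective and an isomorphism away from $S$; since both sheaves are reflexive of rank $1$ (because $C$ is Gorenstein), comparing Euler characteristics forces the cokernel, a priori supported on $S$, to vanish, giving the isomorphism. This is the analogue for $\sI_S\om_C$ of the identification $M\cong L^{-1}$ in the invertible case, and it is the technical heart of the argument: one must argue at the points of $S$, where $\sI_S\om_C$ is not locally free, using that a cluster on a Cohen--Macaulay curve contains a non-zero-divisor, in the spirit of Lemma \ref{estensione}. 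I expect this to be the main obstacle.

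With this identification the two cohomological inputs follow formally. First, $H^0(M)=\ker\big(V\otimes H^0(\Oh_C)\to H^0(\sI_S\om_C)\big)=0$, since $H^0(\Oh_C)=\Ka$ ($C$ being numerically connected) and the resulting map $V\to H^0(\sI_S\om_C)=V$ is the identity. By Serre duality $H^1(\sI_S\om_C^{\otimes 2})^{\vee}\cong\Hom(\sI_S\om_C^{\otimes 2},\om_C)=H^0(\sHom(\sI_S\om_C,\Oh_C))=H^0(M)=0$, so $H^1(\sI_S\om_C^{\otimes 2})=0$ and the long exact sequence shows that $\psi$ is surjective. Second, again by Serre duality together with reflexivity, $H^1(M\otimes\om_C)^{\vee}\cong\Hom(M\otimes\om_C,\om_C)=H^0(\sHom(M,\Oh_C))=H^0(\sI_S\om_C)=V$, so $h^1(M\otimes\om_C)=2$. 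Finally $h^1(\om_C)=h^1(K_C)=1$ by Theorem \ref{thm:curve}(i), whence $V\otimes H^1(\om_C)$ also has dimension $2$.

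Therefore $\psi$ is a surjective map between vector spaces of the same dimension $2$, hence an isomorphism, and in particular injective; by the first paragraph this yields the surjectivity of $m$, completing the proof. In short, once the identification $M\cong\sHom(\sI_S\om_C,\Oh_C)$ is established at the non-invertible points of $S$, everything reduces to the dimension count driven by $h^1(K_C)=1$ and $H^0(M)=0$.
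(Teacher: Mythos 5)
Your argument is correct and is essentially the paper's proof of Lemma \ref{free pencil trick}: the same adapted base-point-free pencil trick, with the kernel sheaf identified with $\sHom(\sI_S\omega_C,\Oh_C)$ (the paper works with its twist $\sK\cong M\otimes\omega_C\cong\sHom(\sI_S\omega_C,\omega_C)$) by injectivity plus generic isomorphism plus equality of Euler characteristics, and the conclusion drawn from Serre duality and reflexivity giving $h^1(\sK)=h^0(C,\sI_S\omega_C)=2=\dim V\otimes H^1(C,\omega_C)$. The only cosmetic deviation is that you deduce $H^1(C,\sI_S\omega_C^{\otimes 2})=0$ from duality and $H^0(M)=0$, where the paper simply invokes Proposition \ref{h1=0}.
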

 \begin{proof}
  Consider the evaluation map $H^0(C,\mathcal{I}_{S}\omega_C) \otimes \omega_C \stackrel{ev}{\to} \mathcal{I}_{S}\omega_C^{\otimes 2}$ and its kernel~$\sK$:
\begin{equation}\label{seqence commutators}
0\to \mathcal{K}\to H^0(C,\mathcal{I}_{S}\omega_C) \otimes \omega_C \to \mathcal{I}_{S}\omega_C^{\otimes 2} \to 0.
\end{equation}
The map  (\ref{surj pencil}) is surjective if and only if $h^1(C,\mathcal{K})=2 $ since
$h^1(C, \mathcal{I}_{S}\omega_C^{\otimes 2})=0$ by Proposition \ref{h1=0}. In the rest of the proof we establish  $h^1(C,\mathcal{K})=2 $.\\

We have  $$\sK \cong \sHom (\sI_S \omega_C, \omega_C).$$ Indeed consider a basis $\{x_0, x_1\}$ for $H^0(C,\mathcal{I}_{S}\omega_C)$ and define the map
\begin{eqnarray*}
\iota \colon  \sHom(\mathcal{I}_{S}\omega_C, \omega_C) &\to& H^0(C,\mathcal{I}_{S}\omega_C) \otimes \omega_C\\
\varphi & \mapsto & x_0 \otimes \varphi( x_1) - x_1 \otimes \varphi( x_0).
\end{eqnarray*}
Our aim is to check that $\iota$ is injective and $\iota(\sHom(\mathcal{I}_{S}\omega_C, \omega_C))$ is precisely $\sK$. It is clear that $ \im(\iota) \subset \sK$. Moreover $\iota$  is injective since the sheaf $\sI_S \omega_C$ is generated by its sections $x_0$ and $x_1$. It is straightforward to check that over the points $P \in C$ not belonging to $S$ (where both the sheaves $\sHom(\mathcal{I}_{S}\omega_C, \omega_C)$ and $\sK$ are invertible), $\iota$ induces an isomorphism. Moreover  computing the Euler characteristic  we have
$$\chi(\sHom(\mathcal{I}_{S}\omega_C, \omega_C))= \chi (\sK) = \deg S - (p_a(C)-1) $$
hence the map $\iota$ induces an isomorphism between $\sHom(\mathcal{I}_{S}\omega_C, \omega_C)$ and $\sK$.

We know that $$H^1(C,\mathcal{K})^{\ast}= \Hom (\mathcal{K}, \omega_C)=H^0(C,\sHom(\mathcal{K}, \omega_C) )$$
It is easy to check that $\sI_S \omega_C$ is reflexive, i.e., $$\sHom(\sHom(\sI_S \omega_C, \omega_C), \omega_C) \cong \sI_S \omega_C$$ thus $h^1(C,\mathcal{K})= h^0(C, \sI_S \omega_C)=2$.
 \end{proof}\\

We may now prove our main theorem.

\begin{TEO}\label{noether}  Let $C$ be a  projective curve either reduced with planar singularities or contained in a smooth  algebraic surface. Assume that $C$ is 3-connected, not honestly hyperelliptic and $p_a (C) \geq 3$. Then the map
$$\Sym^n H^0(C, \omega_C) \to H^0(C, \omega_C^{\otimes n})$$
is surjective for every $n \geq 0$.
\end{TEO}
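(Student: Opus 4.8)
The plan is to deduce the theorem from the surjectivity, for every $n\ge 1$, of the multiplication map
$$\mu_n\colon H^0(C,\omega_C)\otimes H^0(C,\omega_C^{\otimes n})\to H^0(C,\omega_C^{\otimes(n+1)}).$$
Once every $\mu_n$ is onto the statement follows by induction on $n$ (the cases $n=0,1$ being trivial): assuming $\Sym^{n-1}H^0(\omega_C)\to H^0(\omega_C^{\otimes(n-1)})$ is onto, the image of $\Sym^n H^0(\omega_C)$ equals the image of $H^0(\omega_C)\otimes H^0(\omega_C^{\otimes(n-1)})=\im\mu_{n-1}$, which is all of $H^0(\omega_C^{\otimes n})$. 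Throughout I fix the data of Lemma \ref{esiste free pencil}: a generic $H\in H^0(\omega_C)$ with $\div H\cap\Sing(C_{\red})=\emptyset$ and a subcanonical cluster $S\subseteq\div H$ with $h^0(\sI_S\omega_C)=2$ globally generating $\sI_S\omega_C$. Write $V=H^0(\sI_S\omega_C)\subseteq H^0(\omega_C)$ and $\sK=\sHom(\sI_S\omega_C,\omega_C)$; the evaluation sequence $0\to\sK\to V\otimes\Oh_C\to\sI_S\omega_C\to 0$ (with kernel identified exactly as in Lemma \ref{free pencil trick}) is the main tool. Two numerical facts will be used repeatedly: Clifford's inequality applied to $S$ gives $\deg S\le 2p_a(C)-4$, and since $S\subseteq\div H$ with $H$ generically invertible, the part of $S$ lying on any subcurve $B$ has degree at most $\deg(\omega_C)_{|B}$.

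For $n\ge 2$ the surjectivity of $\mu_n$ will be purely cohomological and will use only the $3$-connectedness. Tensoring the evaluation sequence by the line bundle $\omega_C^{\otimes n}$ and passing to cohomology shows that $V\otimes H^0(\omega_C^{\otimes n})\to H^0(\sI_S\omega_C^{\otimes(n+1)})$ is onto as soon as $H^1(C,\sK\otimes\omega_C^{\otimes n})=0$; a degree count using $\deg\sK_{|B}\ge 0$ (as $\Oh_C\subseteq\sK$), the bound $\deg(\omega_C)_{|B}\ge 2p_a(B)+1$ on proper subcurves, and Proposition \ref{h1=0}(i) yields this vanishing for $n\ge 2$. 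Hence $\im\mu_n\supseteq H^0(\sI_S\omega_C^{\otimes(n+1)})$. The same estimates applied to $\sI_S\omega_C^{\otimes n}$, together with $\deg(S\cap B)\le\deg(\omega_C)_{|B}$, give $H^1(C,\sI_S\omega_C^{\otimes n})=0$, so the restriction $H^0(\omega_C^{\otimes n})\to H^0(S,\Oh_S\cdot\omega_C^{\otimes n})$ is onto. Choosing $t\in H^0(\omega_C)$ invertible along $S$ (possible as $\omega_C$ is very ample by Theorem \ref{thm:curve}), multiplication by $t$ sends $H^0(\omega_C^{\otimes n})$ onto a subspace whose image in $H^0(S,\Oh_S\cdot\omega_C^{\otimes(n+1)})$ is everything. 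Since $H^0(\sI_S\omega_C^{\otimes(n+1)})=\ker\bigl(H^0(\omega_C^{\otimes(n+1)})\to H^0(S,\Oh_S\cdot\omega_C^{\otimes(n+1)})\bigr)$ is already in $\im\mu_n$, this forces $\mu_n$ to be surjective.

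The case $n=1$ is the heart of the matter and the only place where the non-hyperelliptic and $3$-connected hypotheses enter. Lemma \ref{free pencil trick} already gives $\im\mu_1\supseteq H^0(\sI_S\omega_C^{\otimes 2})$, which is the kernel of the evaluation $H^0(\omega_C^{\otimes 2})\to H^0(S,\Oh_S)$; hence failure of $\mu_1$ to be onto would force the restriction map $H^0(\omega_C)\otimes H^0(\omega_C)\to H^0(S,\Oh_S)$ not to be onto. I would then apply Lemma \ref{non surj} with $L=\omega_C$ to obtain a nonempty subcluster $S_0\subseteq S$ with
$$p_a(C)+1=h^0(\omega_C)+h^1(\omega_C)\le h^0(\sI_{S_0}\omega_C)+h^1(\sI_{S_0}\omega_C).$$
As $S_0\subseteq\div H$ is subcanonical and $\div H$ avoids $\Sing(C_{\red})$, Theorem \ref{clifford} and Remark \ref{cliffrem} give the reverse inequality, with equality only in the three listed cases. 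These are excluded by the hypotheses: case (ii) because $C$ is not honestly hyperelliptic, case (iii) because $C$ is $3$-connected, and case (i) because $S_0$ is nonempty with $\deg S_0\le\deg S\le 2p_a(C)-4<2p_a(C)-2$, so $S_0\ne 0,K_C$. The resulting contradiction proves $\mu_1$ surjective.

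The main obstacle is exactly this last step. The higher maps $\mu_n$ ($n\ge 2$) reduce to vanishing theorems that are robust, but $\mu_1$ genuinely requires Clifford's theorem together with a precise control of its equality cases; everything hinges on arranging the free pencil so that $S$ is subcanonical, misses the singular locus, and has degree strictly below $2p_a(C)-2$, which is what turns the Clifford equality analysis into a genuine contradiction.
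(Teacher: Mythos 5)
Your proof is correct, and for the crucial degree--$2$ step it coincides with the paper's argument: the free pencil $\sI_S\omega_C$ of Lemma \ref{esiste free pencil}, Lemma \ref{free pencil trick} to get $H^0(\sI_S\omega_C^{\otimes 2})$ into the image, and Lemma \ref{non surj} combined with Theorem \ref{clifford} and Remark \ref{cliffrem} to rule out failure of surjectivity onto $H^0(S,\Oh_S)$ (your explicit exclusion of the equality cases via $\deg S\le 2p_a(C)-4$ and $S_0\ne\emptyset$ is exactly what the paper leaves implicit). Where you genuinely diverge is the reduction to degree $2$: the paper simply cites Konno \cite[Prop.\ 1.3.3]{Konno123} and Franciosi \cite[Th.\ C]{fr_adj} for generation of $R(C,\omega_C)$ in degree at most $2$, whereas you prove surjectivity of every $\mu_n$, $n\ge 2$, from scratch by tensoring the evaluation sequence of the same free pencil with $\omega_C^{\otimes n}$ and invoking Proposition \ref{h1=0}(i) for $\sK\otimes\omega_C^{\otimes n}$ and $\sI_S\omega_C^{\otimes n}$. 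This buys a self-contained proof that does not depend on the Koszul-theoretic results of \cite{Konno123} and \cite{fr_adj}, at the cost of having to handle degrees of the non-locally-free sheaves $\sK=\sHom(\sI_S\omega_C,\omega_C)$ and $\sI_S\omega_C^{\otimes n}$ restricted to subcurves; your estimates ($\deg_B\sK\ge 0$ from $\Oh_C\subseteq\sK$ with finite-length quotient, and $\deg(S\cap B)\le\deg(\omega_C)_{|B}$ from $S\subseteq\div H$) are the right ones and do close the argument, but you should say a word about the convention for $\deg_B$ of a rank--one torsion-free sheaf so that Proposition \ref{h1=0} applies as stated. The final contradiction and the induction unwinding $\Sym^n$ through the $\mu_n$ are both sound.
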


\begin{proof} It is already known that the canonical ring $R(C,\omega_C)= \bigoplus_{n \geq 0}H^0(C, \omega_C^{\otimes n})$ is generated in degree at most 2: see Konno \cite[Prop. 1.3.3]{Konno123} or  Franciosi \cite[Th. C]{fr_adj}. Notice that, even though both papers deal with the case of divisors on smooth surfaces, their proofs go through without changes to reduced Gorenstein curves. Thus to prove the theorem  it is sufficient to show  that the map in degree 2 is surjective:

$$H^0(C, \omega_C) \otimes H^0(C, \omega_C) \to H^0(C, \omega_C^{\otimes 2}).$$

We consider a free pencil $\sI_S \omega_C$ (as in Lemma \ref{esiste free pencil}) and study the following commutative diagram:

$$\xymatrix{H^0(C, \sI_S \omega_C) \otimes H^0(C, \omega_C) \ar@{^{(}->}[r] \ar[d]^{m} & H^0(C, \omega_C)^{\otimes 2} \ar[r] \ar[d]^r \ar[dr]^{p} & H^0(S, \Oh_S) \otimes H^0(C, \omega_C) \ar[d]\\
H^0(C, \sI_S \omega_C^{\otimes 2}) \ar@{^{(}->}[r]  & H^0(C, \omega_C^{\otimes 2}) \ar[r] & H^0(S, \Oh_S)
}$$
A simple diagram chase shows that if both the maps $m$ and $p$ are surjective, then the product map $r$ is surjective too, proving the theorem.\\

 Lemma \ref{free pencil trick} states precisely that the map $m$ is surjective.

The map $p$ must be surjective too: if not, we could apply Lemma \ref{non surj} and conclude that there exists a nonempty subcanonical cluster $S_0 \subseteq S$, contained in a generic section in $H^0(C, \omega_C)$, such that
$$h^0(C, \sI_{S_0} \omega_C)+ h^1(C, \sI_{S_0} \omega_C) \geq p_a(C)+1.$$
By Theorem \ref{clifford} and Remark \ref{cliffrem} we know that this can not happen if $C$ is 3-con\-nected and not honestly hyperelliptic and $S_0\neq \emptyset, K_C$. \end{proof}\\

Theorem \ref{noether} does not hold for every curve with very ample canonical sheaf, but only for the 3-connected ones. Indeed, Noether's Theorem may be false for canonical, non reduced and 3-disconnected curves, as shown in the following example.

\begin{EX}\label{esempio} Let $B$ be a smooth genus $b$ curve with $b \geq 4$ and let $D$ be a general effective divisor on $B$ of degree $b+3$. The linear system $|D|$ is very ample and induces an embedding of $C$ in $\p^3$ (see \cite[Ex. V.B.1]{ACGH}).

Define the ruled surface $X = \p_B (\Oh_B \oplus \Oh_B(D-K_B))$: the map $f \colon  X \to B$ has a section $\Gamma$ with selfintersection $(-b+5)$ (see \cite[\S V.2]{hart} for the main numerical properties). Consider the curve $C= 2 \Gamma$: we have that $p_a(C)= b+4$ and $C$ is 2-disconnected (numerically disconnected if $b\geq 5$). By adjunction we have
$$K_C = (K_X + C)_{|C}=f^{\ast}(D)$$
and it is easy to check that it is very ample on $C$ by analyzing the standard decomposition
$$0 \to \omega_{\Gamma} \to \omega_{C} \to \omega_{C|\Gamma} \to 0. $$
Since $\Gamma$ is a section of $f \colon  X \to B$, $F$ induces an isomorphism $(B, \Oh_B(D)) \iso (\Gamma, \omega_{C|\Gamma} )$.
Therefore it is immediately seen that $|\omega_C|$ separates length 2 clusters.

The map
$$\Sym^2H^0(C, \omega_C) \stackrel{q_0}{\longrightarrow}  H^0(C, \omega_C^{\otimes 2})$$
is not surjective, as one could see from the following diagram:

$$\xymatrix{ H^0(\Gamma, \omega_{\Gamma}) \otimes H^0(C, \omega_C) \ar[r] \ar[d] & H^0(C, \omega_C)^{\otimes 2}  \ar[r] \ar[d]^q & H^0(\Gamma, \omega_{C|\Gamma}) \otimes H^0(\Gamma, \omega_{C|\Gamma}) \ar[d]^p\\
 H^0(\Gamma, \omega_{\Gamma} \otimes \omega_C) \ar[r] &  H^0(C, \omega_C^{\otimes 2})  \ar@{->>}[r]& H^0(\Gamma, \omega_{C|\Gamma}^{\otimes 2}) }$$
Indeed if the map $q_0$ was surjective (hence $q$), then the map $p$ would be surjective as well. Since $\omega_{C|\Gamma} \iso \Oh_B(D)$ the image of the map $p$ is the same as the image of the map $$p_0 \colon \Sym^2 H^0(B, \Oh_B(D)) \to H^0(B, \Oh_B(2D))$$ which is not surjective, as one can easily check by computing the dimension of the two spaces.

\end{EX}

\section{Castelnuovo's Theorem for reduced curves}
In this section we  prove 
a generalization of Castelnuovo's Theorem for reduced curves.

In the proof we will apply Lemma \ref{non surj}  and the following Proposition, which is a Clifford-type result for line bundles of high degree.

\begin{PROP}\label{cliff_grosso} Let $C$ be a projective reduced  curve with planar singularities  and $L$ a line bundle on $C$ such that
$$ \deg L_{|B} \geq 2 p_a (B) +1 \quad \text{for every } B \subset C.$$

If $S$ is a cluster contained in a generic section $H \in H^0(C, L)$ then
$$h^0(C, \sI_S L) + h^1(C, \sI_S L) < h^0(C,L). $$
\end{PROP}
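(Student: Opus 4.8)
The plan is to reduce the inequality, via Riemann--Roch, to a sharp numerical bound on $h^1(C,\sI_S L)$, and then to identify this $h^1$ with the $h^0$ of a \emph{residual} subcanonical cluster, to which Clifford's Theorem can be applied.

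First I would record what the degree hypothesis buys us. By Proposition \ref{h1=0} the sheaf $L$ has $H^1(C,L)=0$ and is very ample, so for generic $H\in H^0(C,L)$ the divisor $D:=\div(H)$ is a reduced cluster of smooth points of degree $\deg L$, with $S\subseteq D$. Writing $S':=D-S$ for the residual cluster (so $\deg S+\deg S'=\deg L$), the sequence $0\to \sI_S L\to L\to \Oh_S\to 0$ together with $H^1(C,L)=0$ gives $h^0(C,\sI_S L)=h^0(C,L)-\deg S+h^1(C,\sI_S L)$. Hence the assertion is equivalent to the purely numerical inequality
$$2\,h^1(C,\sI_S L)<\deg S.$$

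The key step is to compute $h^1(C,\sI_S L)$ by linkage inside $D$. Multiplication by $H$ embeds $\Oh_C$ into $\sI_S L$ with cokernel the ideal of $S$ in $\Oh_D$, which is $\Oh_{S'}$ since $D$ is reduced; thus $0\to \Oh_C \xrightarrow{\,H\,} \sI_S L\to \Oh_{S'}\to 0$. Applying $\sHom(-,\omega_C)$ and using $\sExt^1(\sI_S L,\omega_C)=0$ (as $\sI_S L$ is rank-one torsion free on the Gorenstein curve $C$), $\sHom(\Oh_{S'},\omega_C)=0$ and $\sExt^1(\Oh_{S'},\omega_C)\cong\Oh_{S'}$, I obtain $0\to \sHom(\sI_S L,\omega_C)\to \omega_C\to \Oh_{S'}\to 0$, which identifies $\sHom(\sI_S L,\omega_C)\cong \sI_{S'}\omega_C$. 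Serre duality then yields $h^1(C,\sI_S L)=h^0(C,\sI_{S'}\omega_C)$.

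With this identification the conclusion drops out of Clifford's Theorem. If $h^0(C,\sI_{S'}\omega_C)=0$ the bound is trivial, so assume it is positive; then $S'$ is a Cartier (being reduced) subcanonical cluster and Theorem \ref{clifford} gives $h^0(C,\sI_{S'}\omega_C)\le p_a(C)-\tfrac12\deg S'$. Therefore $2\,h^1(C,\sI_S L)\le 2p_a(C)-\deg S'$, and since the hypothesis applied to $B=C$ gives $\deg L\ge 2p_a(C)+1$, we conclude $2\,h^1(C,\sI_S L)\le 2p_a(C)-\deg S'<\deg L-\deg S'=\deg S$. I expect the main obstacle to be verifying that the residual cluster $S'$ is genuinely subcanonical, so that Theorem \ref{clifford} is applicable; this should follow from the genericity of $H$ together with the canonical factorization of Proposition \ref{lem:adj}. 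A secondary point is to ensure one may assume $C$ to be $2$-connected throughout the non-trivial range. Finally, note that because $\deg L\ge 2p_a(C)+1$ forces the final inequality to be \emph{strict}, the equality cases of Clifford's Theorem never intervene.
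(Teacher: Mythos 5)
Your reduction to the inequality $2\,h^1(C,\sI_S L)<\deg S$ and the linkage computation are correct: the sequence $0\to\Oh_C\xrightarrow{H}\sI_S L\to\Oh_{S'}\to 0$, dualized into $\omega_C$, does give $\sHom(\sI_S L,\omega_C)\cong\sI_{S'}\omega_C$ and hence $h^1(C,\sI_S L)=h^0(C,\sI_{S'}\omega_C)$ by duality. This is a clean dual reformulation of what the paper does (the paper instead produces a cluster $Z$ with $\sI_S L\cong\sI_Z\omega_C$ via Proposition \ref{lem:adj} and bounds $h^0$; the two are equivalent under Riemann--Roch). The problem is that the two points you defer as "obstacles" are not loose ends --- they are the actual content of the proof, and neither follows from what you have written.

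First, subcanonicity of $S'$ genuinely can fail, and genericity of $H$ does not help. A section of $\sI_{S'}\omega_C\cong\sHom(\sI_S L,\omega_C)$ is a map $\sI_S L\to\omega_C$, and it is generically invertible exactly when this map does not factor through a proper subcurve in the sense of Proposition \ref{lem:adj}. Since $S$ is an \emph{arbitrary} subcluster of $\div(H)$ (in the application it is produced by Lemma \ref{non surj} and is not under your control), it may well happen that every such map factors through some $B\subsetneq C$, i.e.\ $H^1(C,\sI_S L)\cong H^1(B,\sI_S L_{|B})$; in that case Theorem \ref{clifford} cannot be applied to $S'$ on $C$ at all, and one must instead pass to $B$ and induct. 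Second, Theorem \ref{clifford} requires $C$ to be $2$-connected, whereas the Proposition assumes only that $C$ is reduced with planar singularities; for a $2$-disconnected $C$ the bound $h^0(C,\sI_{S'}\omega_C)\le p_a(C)-\tfrac12\deg S'$ is simply not available. The paper resolves both issues by a single induction on the number of irreducible components: in the first situation it restricts to the subcurve $B$ computing $H^1$ and applies the inductive hypothesis; in the $2$-disconnected situation it chooses a decomposition $C=C_1+C_2$ with $C_1\cdot C_2=1$ and $C_1$ $2$-connected, applies induction on $C_2$ and Clifford on $C_1$ (where the node being a base point of $|\omega_C|$ guarantees subcanonicity on $C_1$), and adds the two estimates. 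Without this inductive architecture your argument only covers the case where $C$ is $2$-connected and $\sI_{S'}\omega_C$ happens to admit a generically invertible section.
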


\begin{proof}
Notice at first that $H^1(C, L)= 0$ and $|L|$ is very ample by Proposition \ref{h1=0}.
Therefore a generic hyperplane section consists  of $\deg L$ smooth points.
 Moreover by
 Riemann Roch Theorem we have
$$\begin{array}{rll}h^0(C, \sI_S L) + h^1(C, \sI_S L) < h^0(C,L) &  \Longleftrightarrow &
h^0(C, \sI_S L) < h^0(C, L) - \frac12{\deg S} \\ &   \Longleftrightarrow &
h^1(C, \sI_S L) <  \frac12{\deg S}\end{array}$$

We argue by induction on the number of irreducible components of $C$. Suppose that $C$ is irreducible or that the statement holds for every reduced curve with fewer components of $C$. If $C$ is disconnected the statement is trivial, hence we may assume that $C$ is connected. If  $h^0(C, \sI_S L)=0$ or $h^1(C, \sI_S L)=0$ the result is trivial too, thus we may assume $h^0(C, \sI_S L) > 0$ and $h^1(C, \sI_S L) > 0$.\\

Suppose at first that there exists a proper subcurve $B \subset C$ such that $$H^1(C, \sI_S L) \iso H^1(B,  \sI_S L_{|B}).$$
By induction we have that $h^1(C, \sI_S L) <  \frac12 {\deg S_{|B}} \leq  \frac12 {\deg S}$ and we may conclude.\\

If there is no subcurve $B \subset C$ as above (e.g. when $C$ is irreducible) we can easily deduce from Proposition \ref{lem:adj} that there exists a generically surjective map $\sI_S L \into \omega_C$, hence we may assume that there exists a subcanonical cluster $Z$ such that
$$\sI_S L \iso \sI_Z \omega_C.$$
Since $S$ is contained in a generic section of $H^0(C, L)$ it is a Cartier divisor, hence $Z$ is a Cartier divisor too.

If $C$ is 2-connected we apply Theorem \ref{clifford} and we conclude since
\begin{eqnarray*} h^0(C, \sI_C L) &=& h^0(C, \sI_Z \omega_C) \leq p_a(C) - \frac12 \deg Z \\ &=&\frac12 \deg L+1 - \frac12 \deg S < h^0(C, L) - \frac12 \deg S.\end{eqnarray*}

If $C$ is 2-disconnected, we can find a decomposition $C= C_1 + C_2$, such that $C_1\cdot C_2=1$ and $C_1$ is 2-connected (see \cite[Lemma A.4]{CFM}). Thus we consider the following exact sequence:
$$0\to
 {\sI_Z}_{|C_1} \omega_{C_1} \to \sI_Z \omega_C \to (\sI_Z \omega_C)_{|C_2} \to 0.$$

We know by induction that
\begin{eqnarray*} h^0(C_2,(\sI_Z \omega_C)_{|C_2})&=& h^0(C, (\sI_S L)_{|C_2}) < h^0(C_2, L_{|C_2})-\frac12  \deg S_{|C_2}\\&=& -p_a(C_2)+1+\deg L_{|C_2}-\frac12 \deg S_{|C_2}.   \end{eqnarray*}
We apply Theorem \ref{clifford} to ${\sI_Z}_{|C_1} \omega_{C_1}$ since the single point $C_1 \cap C_2$ is a base point for $|\omega_C|$, hence the space $H^0(C_1,  {\sI_Z}_{|C_1}\omega_{C_1})  \cong H^0(C_1,  {\sI_Z}_{|C_1}\omega_{C|C_1}) $ contains an invertible section, that is  $Z_{|C_1}$ is a subcanonical cluster.  Thus
$$h^0(C_1,  {\sI_Z}_{|C_1} \omega_{C_1}) \leq p_a(C_1)- \frac12 \deg Z_{|C_1}=\frac12 \deg L_{|C_1}-\frac12 \deg S_{|C_1} +\frac12.$$
and we conclude since
\begin{eqnarray*}h^0(C, \sI_S L)&=& h^0(C, \sI_Z \omega_C) \leq  h^0(C_2,(\sI_Z \omega_C)_{|C_2}) + h^0(C_1,  {\sI_Z}_{|C_1} \omega_{C_1})\\
&< & -p_a(C_2)+1+\deg L_{|C_2}-\frac12  \deg S_{|C_2} + \frac12 \deg L_{|C_1}-\frac12 \deg S_{|C_1} +\frac12\\
&\leq & h^0(C, L) - \frac12 \deg S.\end{eqnarray*}

\end{proof}

\begin{TEO}\label{castelnuovo} Let $C$ be a  projective reduced  curve with planar singularities  and let $L$ be a line bundle on $C$ such that
$$ \deg L_{|B} \geq 2 p_a (B) +1 \quad \text{for every } B \subset C.$$

Then the product map
$$\Sym^n H^0(C, L) \to H^0(C, L^{\otimes n})$$
is surjective for every $n \geq 1$.
\end{TEO}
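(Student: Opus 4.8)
The plan is to reduce the statement to surjectivity in degree $2$ and then apply the preceding machinery. First I would observe that, exactly as in the proof of Theorem \ref{noether}, it suffices to prove surjectivity of the multiplication map
$$H^0(C, L) \otimes H^0(C, L) \to H^0(C, L^{\otimes 2}),$$
together with the higher-degree maps $H^0(C,L)\otimes H^0(C,L^{\otimes n}) \to H^0(C, L^{\otimes(n+1)})$ for $n\geq 2$. The hypothesis $\deg L_{|B}\geq 2p_a(B)+1$ on every subcurve guarantees via Proposition \ref{h1=0} that $H^1(C, L^{\otimes k})=0$ for all $k\geq 1$ (since $\deg L^{\otimes k}_{|B}\geq \deg L_{|B}\geq 2p_a(B)-1$), so the relevant spaces behave as well as one could hope and the standard bootstrap from degree $2$ to all higher degrees should go through. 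The cleanest route to the higher-degree steps is to set $S$ to be a generic hyperplane section (a reduced Cartier divisor of $\deg L$ smooth points by Proposition \ref{h1=0}(iii)) and run the free-pencil argument, or simply to invoke that once degree $2$ works the line bundle $L^{\otimes k}$ for $k\geq 2$ satisfies even stronger positivity, making its own multiplication maps surjective by the same token.

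The core is therefore the degree-$2$ surjectivity, and here I would argue by contradiction using Lemma \ref{non surj}. Suppose $H^0(C,L)\otimes H^0(C,L)\to H^0(C,L^{\otimes 2})$ fails to be surjective. Since $H^1(C,L^{\otimes 2})=0$, I can find a cluster $S$ contained in a generic section $H\in H^0(C,L)$ for which the composite restriction map
$$H^0(C, L) \otimes H^0(C,L) \to H^0(S, \Oh_S)$$
is not surjective. Applying Lemma \ref{non surj} produces a nonempty subcluster $S_0\subseteq S$ with
$$h^0(C, \sI_{S_0} L) + h^1(C, \sI_{S_0} L) \geq h^0(C, L) + h^1(C, L) = h^0(C,L),$$
the last equality because $H^1(C,L)=0$. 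Since $S_0$ is contained in the generic section $H$, it is a cluster cut out on a Cartier divisor and the hypotheses of Proposition \ref{cliff_grosso} apply.

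The contradiction then comes directly from Proposition \ref{cliff_grosso}, which asserts precisely that for any cluster $S_0$ contained in a generic section of $|L|$ one has
$$h^0(C, \sI_{S_0} L) + h^1(C, \sI_{S_0} L) < h^0(C,L),$$
flatly contradicting the inequality obtained from Lemma \ref{non surj}. Hence the degree-$2$ map is surjective after all. The main obstacle in this scheme is not the final contradiction, which is immediate once the pieces are in place, but rather the careful bookkeeping that the subcluster $S_0$ produced by Lemma \ref{non surj} is genuinely of the form covered by Proposition \ref{cliff_grosso} — that is, that genericity of $H$ and the Cartier/planar-singularity hypotheses are preserved when passing to $S_0$ and that $S_0$ remains subcanonical in the relevant sense; this is exactly the place where the inductive, subcurve-by-subcurve analysis inside Proposition \ref{cliff_grosso} (splitting off a $2$-connected piece when $C$ is $2$-disconnected and invoking Theorem \ref{clifford}) does the real work, so most of the difficulty has been front-loaded into that proposition rather than appearing in the theorem itself.
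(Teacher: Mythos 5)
Your proposal is correct and follows essentially the same route as the paper: reduce to the degree-$2$ map via the vanishing $H^1(C,L^{\otimes n})=0$ (the paper handles the higher-degree steps by citing \cite[Prop.~1.5]{fr\_adj}), take $S=\div(H)$ for a generic section $H\in H^0(C,L)$, use the diagram chase to pass to the restriction map $H^0(C,L)^{\otimes 2}\to H^0(S,\Oh_S)$, and then combine Lemma \ref{non surj} with Proposition \ref{cliff_grosso} to rule out failure of surjectivity. Your closing remark is also accurate: since any $S_0\subseteq S$ is automatically a cluster contained in the generic section $H$, Proposition \ref{cliff_grosso} applies to it directly, and that proposition is indeed where the substance lies.
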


\begin{proof}
Notice at first that $H^1(C, L)= 0$ and  $L$ is very ample  by Proposition \ref{h1=0}. \\

If $n \geq 2$ the map
$$H^0(C, L^{\otimes n}) \otimes H^0(C, L) \to H^0(C, L^{\otimes (n +1)})$$
is surjective by  \cite[Prop. 1.5]{fr_adj}  since $H^1(C, L^{\otimes n} \otimes L^{-1})=0$.  In order to prove the theorem we check that the map in degree 2 is surjective:
$$\Sym^2 H^0(C, L) \to H^0(C, L^{\otimes 2}).$$
To this aim we consider a generic hyperplane section $S=\div L$ and the following commutative diagram
$$\xymatrix{H^0(C, \sI_S L) \otimes H^0(C, L) \ar@{^{(}->}[r] \ar@{->>}[d] & H^0(C, L) \otimes H^0(C, L) \ar[r] \ar[d]^r \ar[dr]^{p} & H^0(S, \Oh_S) \otimes H^0(C, L) \ar[d]\\
H^0(C, \sI_S L^{\otimes 2}) \ar@{^{(}->}[r]  & H^0(C, L^{\otimes 2}) \ar@{->>}[r] & H^0(S, \Oh_S)
}$$
Notice that the first column is surjective since $\sI_S L \iso \Oh_C$
while the second row is exact since $H^1(C, \sI_S L^{\otimes 2}) \iso H^1(C, L)=0$. A simple diagram chase shows that the map $r$ is surjective if and only if the map $p$ is surjective.

It is $h^0(C, \sI_{S_0} L) + h^1(C, \sI_{S_0} L) < h^0(C,L)$  for every subcluster $S_0\subseteq S$
 by Proposition \ref{cliff_grosso}, hence the map $p$ must be surjective
by Lemma \ref{non surj}.
\end{proof}

\begin{REM}
If $C$ is numerically connected our result implies that the embedded curve $\varphi_L (C) \subset \proj H^0(C,L)^{\ast}$ is arithmetically Cohen-Macaulay.
\end{REM}

\vskip.5cm
Marco Franciosi\\
Dipartimento di Matematica, Universit\`a di Pisa\\
Largo B.Pontecorvo 5,  I-56127 Pisa (Italy)\\
{\tt franciosi@dm.unipi.it}
\vspace{.5cm}\\
Elisa Tenni\\
SISSA International School for Advanced Studies\\
via Bonomea 265, I-34136 Trieste (Italy)\\
{\tt etenni@sissa.it}

\end{document}